\newcommand{\length}{\operatorname{length}}
\newcommand{\rk}{\operatorname{rk}}
\newcommand{\Z}{\mathbb Z}
\newcommand{\N}{\mathbb N}
\newcommand{\normal}{\triangleleft}
\newcommand{\normaleq}{\trianglelefteq}
\renewcommand{\epsilon}{\varepsilon}
\theoremstyle{plain}
\newtheorem{Theorem}{Theorem}[section]
\newtheorem{Lemma}[Theorem]{Lemma}
\newtheorem{Proposition}[Theorem]{Proposition}
\newtheorem{Corollary}[Theorem]{Corollary}
\newtheorem{Question}[Theorem]{Question}
\theoremstyle{definition}
\theoremstyle{remark}
\newtheorem{Remark}[Theorem]{Remark}
\newtheorem{Example}[Theorem]{Example}
\title{On small profinite groups}
\author{Patrick Helbig}
\date{}
\begin{document}

\begin{abstract}
A profinite group is called \textit{small} if it has only finitely many open subgroups of index $n$ for each positive integer $n$. We show that every Frattini cover of a small profinite group is small. A profinite group is called \textit{strongly complete} if every subgroup of finite index is open. We show that two profinite groups that are elementarily equivalent, in the first-order language of groups, are isomorphic if one of them is strongly complete, extending a result of Moshe Jarden and Alexander Lubotzky which treats the case of finitely generated profinite groups.
\end{abstract}

\maketitle

\section{Introduction}

A profinite group is called \textit{small} if it has only finitely many open subgroups of index $n$ for each positive integer $n$. Motivation for considering small profinite groups comes from the fact that the absolute Galois group of a field is small if and only if it is determined up to an isomorphism by the field's theory in the language of rings, cf. \cite[Proposition 20.4.6]{FJ}. Related to this fact is the result that a small profinite group is determined up to an isomorphism by the set of those finite groups which occur as quotients by open normal subgroups (in contrast to the general case, where a profinite group is determined by its continuous finite quotients and the quotient maps between them), see \cite[Proposition 16.10.7]{FJ}.

In the first part of this note we characterize small profinite groups as follows:
\begin{Proposition} A profinite group $G$ is small if and only if $r_S(H) < \infty$ for all open subgroups $H$ of $G$ and all finite simple groups $S$.
\end{Proposition}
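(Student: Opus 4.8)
The plan is to establish the nontrivial implication by translating smallness into a counting statement and then inducting on the order of finite quotients. First I would recall the standard reformulations: a profinite group $G$ is small if and only if for each $n$ it has only finitely many open normal subgroups of index at most $n$ (their intersection is then open, and any open subgroup of index $n$ contains its normal core, which is open of index at most $n!$), and this in turn holds if and only if for every finite group $Q$ there are only finitely many continuous surjections $G\twoheadrightarrow Q$ (such surjections are in finite-to-one correspondence with the open normal subgroups $N$ having $G/N\cong Q$, the fibres being torsors under $\Aut(Q)$). I would also note that if $G$ satisfies the right-hand condition of the Proposition then so does every open subgroup $H\le G$, because open subgroups of $H$ are open in $G$; so it is harmless to quantify over open subgroups throughout.

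Assuming now that $r_S(H)<\infty$ for all open $H\le G$ and all finite simple $S$, the first thing to extract is that each such $H$ has only finitely many open normal subgroups with quotient isomorphic to $S$: when $S\cong\Z/p$ these are the hyperplanes of the finite-dimensional $\F_p$-vector space $H^{\mathrm{ab}}/pH^{\mathrm{ab}}$, while when $S$ is nonabelian any $k$ distinct such subgroups intersect in an open normal subgroup with quotient isomorphic to $S^k$ (a subdirect product of $S\times S$ fails to be the graph of an automorphism precisely when the two projections have different kernels, and one iterates this by induction on $k$), so in either case their number is finite. From this I would deduce, for each open $H\le G$, each finite simple $S$ and each $k\ge1$, that there are only finitely many continuous surjections $H\twoheadrightarrow S^k$: the kernel of such a map is an intersection of $k$ of these finitely many normal subgroups --- necessarily distinct ones, since surjectivity onto each pair of coordinates forbids two of the coordinate projections from sharing a kernel --- and there are only $|\Aut(S^k)|$ surjections with a prescribed kernel.

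The core of the argument is then an induction on $|Q|$ proving that $G$ has only finitely many continuous surjections onto $Q$; the case $|Q|=1$ is trivial. Fix a minimal normal subgroup $M$ of $Q$, so $M\cong S^k$ for some finite simple $S$, and let $\pi\colon Q\to Q/M$. Given a continuous surjection $\phi\colon G\twoheadrightarrow Q$, the composite $\bar\phi=\pi\circ\phi$ is a continuous surjection onto $Q/M$, and since $|Q/M|<|Q|$ the inductive hypothesis leaves only finitely many choices for it; fix one, and put $H=\ker\bar\phi$, an open normal subgroup of $G$. Because $\phi(H)$ is a normal subgroup of $Q$ contained in $M$ and is nontrivial --- otherwise $\phi$ would factor through $G/H\cong Q/M$, contradicting surjectivity onto $Q$ --- minimality of $M$ forces $\phi(H)=M$, so $\phi|_H\colon H\twoheadrightarrow S^k$ is one of the finitely many surjections supplied above; fix it, say $\phi|_H=\psi$. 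Finally, any two continuous surjections $\phi,\phi'\colon G\twoheadrightarrow Q$ with $\pi\circ\phi=\pi\circ\phi'=\bar\phi$ and $\phi|_H=\phi'|_H=\psi$ differ by the map $g\mapsto\phi(g)^{-1}\phi'(g)$ taking values in $M$, which is trivial on $H$ and hence factors through the finite group $G/H\cong Q/M$; so $\phi'$ is pinned down by $\phi$ together with an arbitrary function $Q/M\to M$, of which there are at most $|M|^{|Q/M|}$. Multiplying the three finiteness bounds shows $G$ has only finitely many continuous surjections onto $Q$, hence $G$ is small.

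For the converse, if $G$ is small then every open subgroup $H\le G$ is small, and if some $r_S(H)$ were infinite then $H$ would surject onto $S^r$ for every $r$; since $S^r$ has at least $r$ distinct subgroups of index $|S|$ --- the $r$ coordinate ones if $S$ is nonabelian, and $(|S|^r-1)/(|S|-1)$ of them if $S\cong\Z/p$ --- pulling these back would give $H$ infinitely many open subgroups of index $|S|$, contradicting smallness. The step I expect to be the main obstacle is the inductive one: the point to get right is that fixing the image modulo a minimal normal subgroup together with the restriction to the corresponding open subgroup leaves only \emph{boundedly} many surjections, and the cleanest way to see this is not to analyse the relevant cocycle set but simply to bound the functions $Q/M\to M$; the surrounding bookkeeping in the two auxiliary finiteness lemmas (subdirect products for nonabelian $S$, linear algebra for $S\cong\Z/p$) is routine but must be done carefully.
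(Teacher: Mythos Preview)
Your argument is correct in outline and close in spirit to the paper's, but organised differently. Both proofs reduce the backward implication to an induction over the structure of the target finite group, invoking the $S$-rank hypothesis at each stage where a simple factor appears. The paper, however, runs the induction along a composition series $\{1\}=F_0\normal\cdots\normal F_n=F$ from the top: it shows that for every open $H\le G$ and every $k$ there are only finitely many open normal $N\normaleq H$ with $H/N\cong F_k$, since given such $N$ the preimage $K$ of $F_{k-1}$ satisfies $H/K\cong S:=F_k/F_{k-1}$ (finitely many $K$ by the hypothesis on $r_S(H)$) and $K/N\cong F_{k-1}$ (finitely many $N$ by induction applied to the open subgroup $K$). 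This sidesteps your two auxiliary lemmas on surjections onto $S^k$ and on the ambiguity after fixing $\bar\phi$ and $\phi|_H$: once $K$ is chosen, $N$ is directly one of finitely many normal subgroups of $K$, with no further bookkeeping. Your route via a minimal normal subgroup $M\cong S^k$ from below is perfectly valid, just slightly heavier; it has the mild advantage that the induction is on $|Q|$ rather than on a fixed composition series.

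One step needs a small repair. The map $g\mapsto\phi(g)^{-1}\phi'(g)$ does take values in $M$ and vanishes on $H$, but it is not a homomorphism, and for an arbitrary function ``trivial on $H$'' does not imply ``constant on cosets of $H$'': indeed $\phi(gh)^{-1}\phi'(gh)=\phi(h)^{-1}\bigl(\phi(g)^{-1}\phi'(g)\bigr)\phi(h)$, a conjugate rather than the same element when $M$ is nonabelian. Your bound $|M|^{|Q/M|}$ is nonetheless correct for the reason you actually need: fixing coset representatives $g_1,\dots,g_r$ for $H$ in $G$, the homomorphism $\phi'$ is determined by $\phi'|_H=\psi$ together with the values $\phi'(g_i)\in\phi(g_i)M$, giving at most $|M|^r$ possibilities. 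With that adjustment the argument goes through.
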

Here, $r_S(H)$ denotes the $S$-rank of $H$, see Remark \ref{RemarkFrattiniCover} (iii). Adapting the proof of \cite[Lemma 4.2]{FeJ}, where a similar condition was established in the case where $S$ is abelian, we use this characterization to show:
\begin{Theorem} Every Frattini cover of a small profinite group is small.
\end{Theorem}
Since the universal Frattini cover of a profinite group is a projective profinite group, this result can be used, for example, to find small profinite groups which can be realized as absolute Galois groups, see \cite[Corollary 23.1.3]{FJ}.

In the second part we consider the theory of profinite groups in the first-order language of groups, and we say that $G$ and $H$ are \textit{elementarily equivalent}, denoted $G \equiv H$, if $G$ and $H$ satisfy the same sentences in the language of groups. We generalize \cite[Theorem 1.7]{JL}, which states:
\begin{Theorem} \label{IntroductionFG} (Jarden, Lubotzky) Let $G$ and $H$ be profinite groups with $G \equiv H$. If one of them is finitely generated, then $G \cong H$.
\end{Theorem}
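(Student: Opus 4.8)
The plan is to exploit the Nikolov--Segal theorem on finitely generated profinite groups together with the rigidity of small profinite groups recalled in the introduction. Write $d$ for the topological rank of the finitely generated one among the two, say $G$. The first step is to note that, by the effective form of the Nikolov--Segal theorem, for every $n$ there is a first-order formula $\psi_n(x)$ --- asserting that $x$ is a product of at most $f(d,n)$ values of a fixed finite family of group words --- which in \emph{every} topologically $d$-generated profinite group $\Gamma$ defines the subgroup $\Gamma_{[n]}$ equal to the intersection of all subgroups of $\Gamma$ of index at most $n$. Since such $\Gamma$ is strongly complete, $\Gamma_{[n]}$ is open, the $\Gamma_{[n]}$ form a base of neighbourhoods of $1$, and $\Gamma = \varprojlim_n \Gamma/\Gamma_{[n]}$. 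Applying this to $G$: each finite quotient $G/G_{[n]}$, together with the canonical epimorphisms $G/G_{[m]} \to G/G_{[n]}$ ($m \ge n$), is interpreted in $G$ by formulas not depending on $G$, so the isomorphism type of the inverse system $(G/G_{[n]})_n$ is part of $\Th(G)$.

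Secondly I would transfer this to $H$. Because $G \equiv H$, each $\psi_n$ defines in $H$ a normal subgroup $H_{[n]}$ which is of finite index (hence open, being also closed) and satisfies $H/H_{[n]} \cong G/G_{[n]}$ compatibly with the transition maps; in other words the two inverse systems are isomorphic. Passing to limits gives a continuous epimorphism $H \twoheadrightarrow \varprojlim_n H/H_{[n]} \cong G$, with kernel $R = \bigcap_n H_{[n]}$. Thus $G$ is a continuous quotient of $H$, and the whole problem reduces to showing $R = 1$, equivalently that the chain $(H_{[n]})$ is cofinal in the open subgroups of $H$, equivalently that $H$ is topologically finitely generated.

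This last point is the one I expect to be the main obstacle: a priori $(H_{[n]})$ need not be cofinal, since the Nikolov--Segal uniformity was invoked only for $d$-generated groups and $H$ is not yet known to be one. I would first reduce, via Gaschütz's lemma --- a generating $d$-tuple of a finite quotient lifts to a generating $d$-tuple of any larger finite quotient, as $d$ bounds all ranks involved --- the claim ``$H$ is topologically $d$-generated'' to the claim that \emph{every} finite continuous quotient of $H$ has rank at most $d$. To prove the latter, take a finite continuous quotient $F = H/M$ with $|F| = m$, and apply the Nikolov--Segal formula tuned to a sufficiently large generator bound $D = D(m)$ (exceeding the rank of every subgroup of $S_m$): in the finite image $\overline{H}$ of $H$ in $\operatorname{Sym}(H/M)$ that formula defines the trivial subgroup, because a point stabiliser is core-free and hence the intersection of all subgroups of $\overline{H}$ of index at most $m$ is trivial; consequently the normal subgroup $H'_{[m]}$ it defines in $H$ --- which by transfer is of finite index with $H/H'_{[m]} \cong G/G_{[m]}$ --- is contained in $\ker(H \to \overline{H}) \subseteq M$, so $F$ is a quotient of $G/G_{[m]}$, which has rank at most $d$. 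Carrying this chain of identifications through rigorously is where the full strength of Nikolov--Segal is really used.

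Once $H$ is known to be topologically $d$-generated, the proof closes at once: the formulas $\psi_n$ now cut out a cofinal chain of open normal subgroups in $H$ as well, so $H = \varprojlim_n H/H_{[n]} \cong \varprojlim_n G/G_{[n]} = G$, the middle isomorphism being the isomorphism of inverse systems supplied by the second step; hence $G \cong H$. (Alternatively, in the spirit of the first part of the note: $H$ is now finitely generated, hence small, and it has the same finite continuous quotients as $G$, so $G \cong H$ by \cite[Proposition 16.10.7]{FJ}.)
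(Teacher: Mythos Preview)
Your overall plan --- bounded width from Nikolov--Segal, transfer along $G\equiv H$, then reconstruction of a small profinite group from its finite quotients --- is exactly the mechanism the paper (following \cite{JL}) uses. The paper, however, does not prove Theorem~\ref{IntroductionFG} on its own: it deduces it from the strongly complete case (Theorem~\ref{EquivalentStronglyComplete}) via \cite[Theorem~1.1]{NS}, and that in turn rests on Proposition~\ref{IsomorphismCondition}, whose proof is the streamlined version of what you sketch.

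Two specific comments. First, the assertion that Nikolov--Segal supplies a formula $\psi_n$ cutting out \emph{precisely} $\Gamma_{[n]}$ in every $d$-generated profinite $\Gamma$ is not what \cite{NS} proves, and it is not clear it holds: $\Gamma_{[n]}$ is characteristic but not in general verbal, while Nikolov--Segal bounds widths of verbal subgroups only. What one actually needs, and what the paper uses in Lemma~\ref{EquivalenceVerbalSubgroup} and Proposition~\ref{IsomorphismCondition}, is for each finite $A$ a single word $w$ with $w(A)=\{1\}$ and $w(G)$ open; then ``$x$ is a product of at most $r$ $w$-values'' defines $w(G)$ in $G$ and, by $G\equiv H$, defines $w(H)$ in $H$ with $G/w(G)\cong H/w(H)$. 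Replacing your $\Gamma_{[n]}$ by such $w(G)$ repairs the argument. Second, the entire third step --- showing $H$ is $d$-generated via Gasch\"utz and the symmetric-group embedding, with formulas ``tuned to $D$'' --- is unnecessary. Once $G/w(G)\cong H/w(H)$ for a word $w$ killing a given finite $A$, one has directly that $A$ is a continuous quotient of $G$ if and only if it is one of $H$; since $G$ (not $H$) is already known to be small, \cite[Proposition~16.10.7]{FJ} finishes. Your third step, after the verbal-subgroup fix, does correctly show that every finite quotient of $H$ is a quotient of some $G/w(G)$, hence $d$-generated --- but the paper's route reaches the conclusion without that detour.
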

It is an immediate consequence of \cite[Proposition 2.20]{F} that the same holds if one of the groups is small and abelian instead of finitely generated, which will also follow from our result. The proof of Theorem \ref{IntroductionFG} uses two facts: first, that finitely generated profinite groups are small, and second, the existence of certain group words whose corresponding verbal subgroups are open in the finitely generated profinite group. The second fact is a deep result proved in \cite{NS}, where it is used to show that all finitely generated profinite groups are \textit{strongly complete}, i.e. every subgroup of finite index is open, see \cite[Theorems 1.1 and 1.2]{NS}. We begin by formulating the approach used in \cite{JL} in a slightly more general setting that takes the above points into consideration:
\begin{Proposition}\label{IntroductionIC} Let $G$ and $H$ be profinite groups with $G \equiv H$. Suppose that $G$ is small and that for each finite group $A$ there exists a group word $w$ with $w(A) = \{1\}$ such that the verbal subgroup $w(G)$ is open in $G$. Then $G \cong H$.
\end{Proposition}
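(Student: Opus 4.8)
The plan is to reduce to \cite[Proposition 16.10.7]{FJ}: since that result says a small profinite group is determined up to isomorphism by the set of finite groups occurring as its quotients by open normal subgroups, it suffices to show that $H$ is small and that $G$ and $H$ have exactly the same such finite quotients. The key tool is a definability observation about verbal subgroups. Fix a word $w$ for which $w(G)$ is open in $G$, and let $X_K$ denote, in any group $K$, the set of all values of $w$ together with their inverses, so that $X_K$ is symmetric, contains $1$, and $w(K) = \bigcup_{n \ge 1} X_K^{\le n}$ (products of at most $n$ elements of $X_K$), an increasing chain. In $G$, since $w(G)$ is open it is closed, hence compact, while each $X_G^{\le n}$ is a finite union of continuous images of powers of the compact set $X_G$, hence closed; so by the Baire category theorem some $X_G^{\le n}$ has nonempty interior in $w(G)$. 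Translating by a suitable element and using symmetry of $X_G$ then puts an open subgroup $V$ inside $X_G^{\le 2n}$, and writing out coset representatives of $V$ in $w(G)$ as $w$-words shows that $w(G) = X_G^{\le N}$ for some $N$ depending on $w$ and $G$. Hence there is a parameter-free first-order formula $\varphi_w(x)$ in the language of groups — ``$x$ is a product of at most $N$ factors, each a value of $w$ or the inverse of such a value'' — with $\varphi_w(G) = w(G)$.

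Next I would transfer finite quotients across $G \equiv H$ using this. Given any finite group $A$, the hypothesis supplies a word $w = w_A$ with $w(A) = \{1\}$ and $w(G)$ open; put $\bar A := G/w(G)$, a concrete finite group. Let $\sigma_A$ be the sentence asserting that $\varphi_w$ defines a normal subgroup and that the quotient of the ambient group by it is isomorphic to $\bar A$; this is first-order, the normality and subgroup clauses being universal statements about $\varphi_w$ and the isomorphism type of the finite quotient being expressed by the existence of $|\bar A|$ elements that lie in distinct $\varphi_w$-cosets, cover the group, and multiply according to the multiplication table of $\bar A$ (which is hard-coded into $\sigma_A$). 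Since $\varphi_w(G) = w(G)$ is a normal subgroup with $G/w(G) = \bar A$, we have $G \models \sigma_A$, hence $H \models \sigma_A$. Interpreting $\sigma_A$ in $H$: the set $\varphi_w(H) = X_H^{\le N}$ is a normal subgroup of finite index; being compact it is closed, hence open; and being a subgroup that contains all values of $w$ it contains the abstract verbal subgroup $w(H)$, while obviously $\varphi_w(H) \subseteq w(H)$, so $\varphi_w(H) = w(H)$. Thus $w_A(H)$ is open and normal in $H$ with $H/w_A(H) \cong G/w_A(G)$, for every finite group $A$.

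The remaining steps are bookkeeping. If $A = G/N$ is a finite quotient of $G$, then $w_A(A) = \{1\}$ forces $w_A(G) \subseteq N$, so $A$ is a quotient of $G/w_A(G) \cong H/w_A(H)$ and hence of $H$; symmetrically, every finite quotient of $H$ is a finite quotient of $G$, so $G$ and $H$ have the same set of finite quotients. For smallness of $H$: for a fixed finite group $A$, every open normal subgroup $M$ of $H$ with $H/M \cong A$ contains $w_A(H)$ (again because $w_A(A) = \{1\}$), hence corresponds to a normal subgroup of the one fixed finite group $H/w_A(H)$ with quotient isomorphic to $A$; there are only finitely many of these. Since there are finitely many isomorphism types of finite group of each order and the normal core of an open subgroup of index $n$ is open of index at most $n!$, this gives finitely many open subgroups of each index, i.e. $H$ is small. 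Then \cite[Proposition 16.10.7]{FJ} yields $G \cong H$.

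I expect the main obstacle to be the definability step: passing from ``$w(G)$ is open'' to ``$w(G)$ equals the set of products of a bounded number of $w$-values, and is therefore defined by a parameter-free first-order formula'', and then the attendant care that this formula need not define a subgroup in $H$ until $\sigma_A$ is known to hold there — after which the identification $\varphi_w(H) = w(H)$ is automatic. Everything else relies on standard facts: that verbal subgroups are normal (indeed fully invariant), that a closed subgroup of finite index in a profinite group is open, and the equivalence between the ``few open subgroups of index $n$'' and ``few open normal subgroups with each finite quotient'' formulations of smallness.
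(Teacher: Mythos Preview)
Your argument is correct and follows the same route as the paper's proof of Proposition~\ref{IsomorphismCondition} (via Lemmas~\ref{VerbalSubgroupLength} and~\ref{EquivalenceVerbalSubgroup}): bounded width of $w(G)$ makes $w(G)$ first-order definable, elementary equivalence then yields $w(H)$ open with $G/w(G)\cong H/w(H)$, and \cite[Proposition~16.10.7]{FJ} finishes once the finite quotients match. The only differences are cosmetic: you establish bounded width yourself via Baire category where the paper cites \cite{W}, you hard-code the multiplication table of $G/w(G)$ into a single sentence rather than invoking the relativization lemma \cite[Lemma~1.2]{JL} to deduce $G/w(G)\equiv H/w(H)$, and you additionally verify that $H$ is small---a step the paper omits, appealing to \cite[Proposition~16.10.7]{FJ} with only $G$ assumed small.
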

Then we use the characterization of strong completeness proved in \cite[Theorem 2]{SW}, involving varieties generated by finite groups, and the theorem of Oates and Powell (cf. \cite[Corollary 52.12]{Nm}) that the laws of a finite group have a finite basis, to show that strongly complete profinite groups satisfy the assumptions of Proposition \ref{IntroductionIC}, therefore showing:
\begin{Theorem} \label{IntroductionSC} Let $G$ and $H$ be profinite groups with $G \equiv H$. If one of them is strongly complete, then $G \cong H$.
\end{Theorem}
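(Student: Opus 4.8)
The plan is to show that a strongly complete profinite group satisfies the hypotheses of Proposition \ref{IntroductionIC} and then to quote that proposition. Since $\equiv$ is symmetric, I may assume without loss of generality that $G$ itself is strongly complete (if instead $H$ is, run the argument with $G$ and $H$ interchanged). It then suffices to establish (a) that $G$ is small, and (b) that for each finite group $A$ there is a group word $w$ with $w(A)=\{1\}$ whose verbal subgroup $w(G)$ is open in $G$.

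For (b), fix a finite group $A$ and let $\mathcal V=\mathrm{var}(A)$ be the variety it generates. By the Oates--Powell theorem \cite[Corollary 52.12]{Nm} the laws of $A$ have a finite basis, so $\mathcal V=\mathrm{var}(w_1,\dots,w_m)$ for finitely many words $w_i$, which I may take to involve pairwise disjoint tuples of variables. Put $w:=w_1w_2\cdots w_m$, the product taken in the free group on all of these variables. A group satisfies the law $w\equiv 1$ if and only if it satisfies each $w_i\equiv 1$: the nontrivial direction follows by specialising, for each $j$, all variables outside $w_j$ to $1$ and using that a word takes the value $1$ there. Hence $\mathrm{var}(w)=\mathcal V$, so in particular $w(A)=\{1\}$, and moreover the verbal subgroup $w(G)$ coincides with the verbal subgroup $\mathcal V(G)$ attached to the variety $\mathcal V$ (the laws of $\mathcal V$ are exactly the consequences of $w$, and these take values inside the fully invariant subgroup $w(G)$). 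Now the characterization of strong completeness in \cite[Theorem 2]{SW} gives, in particular, that if $G$ is strongly complete then $\mathcal V(G)$ is open in $G$ for every variety $\mathcal V$ generated by a finite group; applied to $\mathcal V=\mathrm{var}(A)$ this yields that $w(G)=\mathcal V(G)$ is open, proving (b).

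For (a), fix $n\ge 1$ and apply the construction of (b) to the finite group $A_n$ defined as the direct product of one representative of each isomorphism class of groups of order dividing $n!$. This produces a word $w$ with $w(G)$ open, hence of finite index, in $G$, and with $\mathrm{var}(w)=\mathrm{var}(A_n)$ containing every group of order dividing $n!$. If $H\le G$ is open of index $n$, its normal core $N$ is open with $[G:N]$ dividing $n!$, so $G/N\in\mathrm{var}(A_n)=\mathrm{var}(w)$, whence $w(G)\subseteq N\subseteq H$. Thus every open subgroup of index $n$ contains the fixed open subgroup $w(G)$, and since $G/w(G)$ is finite there are only finitely many such subgroups. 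Hence $G$ is small, and Proposition \ref{IntroductionIC} now gives $G\cong H$.

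The substantive content lies entirely in the cited results — Oates--Powell on finite bases of laws, and the Smith--Wilson characterization, which itself rests on the Nikolov--Segal theorem — so the work on our side is a short assembly. The one point requiring care is that we need a \emph{single} word $w$ that is simultaneously a law of $A$ and has $w(G)$ open; this is exactly why the finite basis theorem is invoked, allowing the several laws cut out by \cite[Theorem 2]{SW} to be fused into one word whose verbal subgroup is still the one controlled by that theorem. Once (b) is in place, the reduction of smallness to it is routine.
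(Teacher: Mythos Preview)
Your argument is correct and follows essentially the same route as the paper: verify the two hypotheses of Proposition~\ref{IntroductionIC}, using Oates--Powell to fuse a finite basis of laws for $\mathrm{var}(A)$ into a single word $w$ and then invoking condition~(iv) of the Smith--Wilson characterization to get $w(G)$ open. The only difference is in establishing smallness: the paper simply reads it off from the equivalence (i)$\Leftrightarrow$(ii) of \cite[Theorem~2]{SW} (recorded as Corollary~\ref{StronglyCompleteSmall}), whereas you re-derive it by applying your construction~(b) to a suitable test group $A_n$ and trapping every index-$n$ open subgroup above the resulting open $w(G)$ --- a valid but unnecessary detour, since the same cited theorem already hands you smallness directly.
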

Since finitely generated profinite groups are strongly complete by \cite[Theorem 1.1]{NS}, Theorem \ref{IntroductionSC} implies Theorem \ref{IntroductionFG}. In the third section we will give an example of a strongly complete profinite group which is neither finitely generated nor abelian, showing that Theorem \ref{IntroductionSC} is more general than both Theorem \ref{IntroductionFG} and the result concerning small abelian profinite groups from \cite{F}.

\section{Frattini covers of small profinite groups}

\begin{Remark} \label{RemarkFrattiniCover} Let $G$ be a profinite group. \begin{enumerate}[(i)]

\item By $\rk(G)$ we denote the \textit{rank} of $G$, see \cite[Chapter 17.1]{FJ}.

\item The \textit{Frattini subgroup} of $G$, denoted $\Phi(G)$, is the intersection of all maximal, open subgroups of $G$, see \cite[Section 22.1]{FJ}. A \textit{Frattini cover} of $G$ is an epimorphism (i.e. a continuous surjective group homomorphism) $\varphi : K \to G$ from a profinite group $K$ to $G$ with $\ker(\varphi) \subseteq \Phi(G)$, see \cite[Section 22.5]{FJ}. Every Frattini cover $\varphi : K \to G$ has the following property: If $\Delta$ is a continuous quotient of $K$, then there exist a continuous quotient $\Gamma$ of $G$ with $\rk(\Delta) = \rk(\Gamma)$ and an epimorphism $\Delta \to \Gamma$. (If $K = \tilde G$ is the universal Frattini cover of $G$, then this follows from \cite[Corollary 22.5.3, Lemma 22.6.3]{FJ}; the general case follows because $\tilde G \to G$ factors through $K \to G$, see \cite[Proposition 22.6.1]{FJ}.)

\item Let $S$ be a finite simple group. Let $M_S(G)$ denote the intersection of all open normal subgroups $N$ of $G$ with $G/N \cong S$. Then $G/M_S(G) \cong S^\lambda$ for a unique cardinal number $\lambda$ which is called the \textit{$S$-rank} of $G$ and is denoted by $r_S(G)$, see \cite[Section 8.2]{RZ}.
\end{enumerate}
\end{Remark}

We first establish the characterization of small profinite groups announced in the introduction.

\begin{Proposition} \label{SRankSmall} A profinite group $G$ is small if and only if $r_S(H) < \infty$ for all open subgroups $H$ of $G$ and all finite simple groups $S$.
\end{Proposition}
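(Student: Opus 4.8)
The plan is to prove the two implications separately; the ``only if'' direction is short, while the work lies in the ``if'' direction, where one adapts to arbitrary finite simple $S$ the argument used for abelian $S$ in \cite[Lemma 4.2]{FeJ}.

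For the ``only if'' direction, I would first note that smallness is inherited by open subgroups: if $H\le G$ is open of index $k$, then an open subgroup of $H$ of index $m$ has index $km$ in $G$, so for each $m$ the open subgroups of $H$ of index $m$ inject into a finite set. Hence it suffices to prove $r_S(G)<\infty$ for every finite simple $S$ when $G$ is small, and for this I would use the equivalence: $r_S(G)<\infty$ if and only if $G$ has only finitely many open normal subgroups $N$ with $G/N\cong S$. Indeed, if there are only finitely many such $N$, then $M_S(G)$ is a finite intersection of open subgroups, hence open, so $G/M_S(G)\cong S^{r_S(G)}$ is finite; conversely, if $r_S(G)<\infty$, then each such $N$ contains $M_S(G)$ and thus corresponds to one of the finitely many normal subgroups of the finite group $G/M_S(G)$ with quotient $S$. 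Since each such $N$ has index $|S|$ in $G$, smallness of $G$ forces $r_S(G)<\infty$.

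For the ``if'' direction, assume $r_S(H)<\infty$ for all open $H\le G$ and all finite simple $S$, noting that this hypothesis is inherited by every open subgroup of $G$. First I would reduce smallness to a statement about normal subgroups: it suffices to show that, for every $n$, $G$ has only finitely many open \emph{normal} subgroups of index $n$. This is the standard core argument — an open subgroup of index $n$ contains its core, an open normal subgroup of index dividing $n!$, of which there are then finitely many, and each finite quotient of $G$ by such a core has only finitely many subgroups of index $n$. Then I would prove, by strong induction on $n$, that every profinite group satisfying the hypothesis has only finitely many open normal subgroups of index $n$. Given such an $N$ with $n=[G:N]\ge 2$, choose an open normal subgroup $M$ with $N\le M\triangleleft G$ and $G/M$ simple, by taking $M/N$ to be a maximal proper normal subgroup of the nontrivial finite group $G/N$. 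If $M=N$, then $G/N$ is simple; writing $S:=G/N$, the subgroup $N$ contains $M_S(G)$, and the equivalence from the ``only if'' direction, together with the finiteness of the set of groups of order $n$, leaves only finitely many possibilities for $N$. If $M\ne N$, then $M$ is an open normal subgroup of index $[G:M]<n$, so the induction hypothesis applied to $G$ gives finitely many choices for $M$; and for each such $M$, the subgroup $N$ is open and normal \emph{in $M$} of index $[M:N]=n/[G:M]<n$, so the induction hypothesis applied to $M$ — which again satisfies the standing hypothesis since it is open in $G$ — bounds the number of such $N$. Summing these finitely many contributions completes the induction, and the reduction then gives smallness of $G$.

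The step I expect to be the main obstacle is making the induction cohere with the core reduction: passing to cores can inflate the index from $n$ up to $n!$, so one cannot induct directly on the number of open subgroups of index $n$. The way around this — which I regard as the crux — is to run the entire induction inside the class of normal subgroups, which is legitimate precisely because a subgroup normal in $G$ remains normal in every intermediate subgroup $M$, and to invoke the core reduction only once, at the very end, after the statement for normal subgroups has been established at all indices. A secondary point is the case $M=N$, where one must exploit the finiteness of $r_S(G)$ itself — not merely of $G$ — to control the open normal subgroups with a prescribed simple quotient.
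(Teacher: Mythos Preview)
Your argument is correct and follows essentially the same route as the paper: reduce to counting open \emph{normal} subgroups, then inductively peel off a simple top quotient and use the finiteness of $r_S(\cdot)$ on the relevant open subgroup to control that layer. The only cosmetic difference is the induction parameter---the paper fixes a finite group $F$ with a composition series and inducts on the composition length (showing that for every open $H\le G$ there are finitely many open $N\trianglelefteq H$ with $H/N\cong F_k$), whereas you run strong induction directly on the index $n$; both schemes unwind to the same bookkeeping.
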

\begin{proof} Suppose that $G$ is small. Then every open subgroup of $G$ is small, hence it suffices to show that $r_S(G) < \infty$ for all finite simple groups $S$. Since $G$ is small, $M_S(G)$ is an intersection of finitely many open subgroups of $G$, hence open in $G$ and thus of finite index. Then $G/M_S(G)$ is a finite group and so $r_S(G)$ is finite.

Conversely, suppose that $r_S(H) < \infty$ for all open subgroups $H$ of $G$ and all finite simple groups $S$. Observe that it suffices to show that for any finite group $F$ there are only finitely many open normal subgroups $N$ of $G$ with $G/N \cong F$.

Let $F$ be a finite group and consider a composition series
$$\{1\} = F_0 \normal F_1 \normal ... \normal F_n = F$$
of $F$. We claim that, for all $k \in \{0, ..., n\}$ and all open subgroups $H$ of $G$, there exist only finitely many open normal subgroups $N$ of $H$ such that $H/N \cong F_k$. The case $H = G$ and $k = n$ then finishes the proof. We proceed by induction on $k$.

The claim is true if $k = 0$. Let $0 < k \leq n$ and let $H$ be an open subgroup of $G$. Let $S := F_k / F_{k-1}$. By assumption, $r_S(H)$ is finite, hence there are only finitely many open normal subgroups $K_1, ..., K_m$ of $H$ with $H/K_i \cong S$ for each $i$. Moreover, for each $i$ the group $K_i$ is an open subgroup of $G$, so by induction there are only finitely many open normal subgroups $N_{i1}, ..., N_{in_i}$ of $K_i$ with $K_i/N_{ij} \cong F_{k-1}$ for each $j$.

Now let $N$ be an open normal subgroup of $H$ such that $H/N \cong F_k$. We show that $N = N_{ij}$ for some $i \in \{1, ..., m\}$ and $j \in \{1, ..., n_i\}$, thus showing that there are only finitely many such $N$. Let $\pi : H \to H/N$ be the quotient map and $\varphi : H/N \to F_k$ an isomorphism. Let $K := \pi^{-1}(\varphi^{-1}(F_{k-1}))$. Then $N \normaleq K \normaleq H$ are open normal subgroups of each other with $K/N \cong F_{k-1}$ and
$$H/K \cong \frac{H/N}{K/N} \cong F_k / F_{k-1} = S,$$
hence $K = K_i$ for some $i \in \{1, ..., m\}$ and so $N = N_{ij}$ for some $j \in \{1, ..., n_i\}$.
\end{proof}

Now we work towards applying this characterization to Frattini covers of small profinite groups.

\begin{Lemma} \label{SRankEqualsRank} Let $G$ be an infinite profinite group, $S$ a finite simple group, $\lambda$ a cardinal number, and $\varphi : S^\lambda \to G$ an epimorphism. Then $\rk(G) = r_S(G)$.
\end{Lemma}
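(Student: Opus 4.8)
The plan is to reduce to the case $G = S^\mu$ for some infinite cardinal $\mu$ and then compute both invariants of $S^\mu$ directly. \textbf{Step 1: every infinite continuous quotient of $S^\lambda$ is isomorphic to a power of $S$.} Write $G \cong S^\lambda/N$ with $N$ a closed normal subgroup. If $S$ is abelian, say $S \cong \Z/p$, then $G$ is a profinite abelian group of exponent $p$, so its Pontryagin dual is a discrete $\F_p$-vector space, hence free over $\F_p$; dualizing back gives $G \cong (\Z/p)^\mu \cong S^\mu$ for some cardinal $\mu$. If $S$ is non-abelian, I would use the standard description of the closed normal subgroups of $S^\lambda$ (adapting statements in \cite{RZ}): since $S$ is simple, $\pi_i(N)\in\{\{1\},S\}$ for each coordinate projection $\pi_i$; put $A := \{i : \pi_i(N) = S\}$ and let $S_i^\ast \cong S$ be the $i$-th coordinate subgroup of $S^\lambda$. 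Each $N \cap S_i^\ast$, being an intersection of two normal subgroups of $S^\lambda$, is normal in $S^\lambda$, hence $\{1\}$ or $S_i^\ast$; and the commutator identity $[N, S_i^\ast] \subseteq N \cap S_i^\ast$ (immediate, since $\pi_j([n,s]) = 1$ for $j \neq i$ and $s \in S_i^\ast$) rules out $N \cap S_i^\ast = \{1\}$ when $i \in A$, because then $N$ would centralize $S_i^\ast$ while $\pi_i$ maps both onto $S$, forcing $S$ abelian. Hence $N = \prod_{i \in A} S_i^\ast$ and $G \cong S^\mu$ with $\mu := \abs{\lambda \setminus A}$. In either case $G$ infinite forces $\mu$ infinite.

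\textbf{Step 2: $\rk(S^\mu) = \mu = r_S(S^\mu)$ for infinite $\mu$.} The kernels $\ker(\pi_i)$, $i \in \mu$, are pairwise distinct open normal subgroups with quotient $S$ whose intersection is trivial, so $M_S(S^\mu) = \{1\}$ and $S^\mu/M_S(S^\mu) = S^\mu$; by the uniqueness in Remark~\ref{RemarkFrattiniCover}(iii) this gives $r_S(S^\mu) = \mu$. For the rank, the bound $\rk(S^\mu) \le \mu$ is routine: the set of elements supported on a single coordinate is a generating set of cardinality $\mu$ converging to $1$, and every open subgroup $H$ of $S^\mu$ contains $\prod_{i \notin E} S_i^\ast$ for some finite $E$ and hence is a finite-index overgroup of a copy of $S^\mu$, so $d(H) \le \mu$. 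For $\rk(S^\mu) \ge \mu$ I would argue by counting: if $d(S^\mu) = \kappa$, then $S^\mu$ is a continuous image of the free profinite group $\widehat F_\kappa$, and since a continuous homomorphism from $\widehat F_\kappa$ to a finite group kills all but finitely many of the free generators, $\abs{\Hom(S^\mu, S)} \le \abs{\Hom(\widehat F_\kappa, S)} \le \max(\kappa, \aleph_0)$; as the $\mu$ projections $\pi_i \colon S^\mu \to S$ are pairwise distinct, $\mu \le \max(\kappa,\aleph_0)$, and since $\mu$ is infinite this yields $\kappa \ge \mu$. Combining the two steps, $\rk(G) = \rk(S^\mu) = \mu = r_S(S^\mu) = r_S(G)$.

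I expect the main obstacle to be Step 1 in the non-abelian case, i.e. establishing that the closed normal subgroups of $S^\lambda$ are exactly the sub-products $\prod_{i \in A} S_i^\ast$; once $G \cong S^\mu$ is in hand, the rest is bookkeeping with ranks and $S$-ranks. If a clean reference for the structure of $S^\lambda$ (or of inverse limits of finite powers of $S$) is available, Step 1 reduces to a citation, and in particular the abelian case can be folded in uniformly.
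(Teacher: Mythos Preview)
Your proposal is correct and follows exactly the paper's strategy: first reduce to $G \cong S^\mu$ by analyzing the closed normal subgroups of $S^\lambda$, then read off $\rk(S^\mu) = \mu = r_S(S^\mu)$ for infinite $\mu$. The paper does precisely what you anticipate in your final paragraph, dispatching Step~1 by citation (\cite[Lemmas 22.7.2, 22.7.3]{FJ} in the abelian case and \cite[Lemma 8.2.4]{RZ} in the non-abelian case, yielding a complement $U \cong S^\kappa$ with $S^\lambda = \ker(\varphi) \times U$), and it treats the equalities $\rk(S^\kappa) = \kappa = r_S(S^\kappa)$ as standard facts without the explicit verification you supply.
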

\begin{proof} The kernel of $\varphi$ is a closed normal subgroup of $S^\lambda$, thus there exists a closed normal subgroup $U \cong S^\kappa$ of $S^\lambda$, where $\kappa$ is a cardinal number, with $S^\lambda = \ker(\varphi) \times U \cong \ker(\varphi) \times S^\kappa$; see \cite[Lemmas 22.7.2, 22.7.3]{FJ} for the case of abelian $S$ and \cite[Lemma 8.2.4]{RZ} for the case of non-abelian $S$. It follows that $G \cong S^\kappa$. Since $G$ is infinite, also $\kappa$ is, thus $\rk(G) = \rk(S^\kappa) = \kappa = r_S(S^\kappa) = r_S(G)$.
\end{proof}

We now establish above condition for a Frattini cover of a small profinite group. To do so we adapt the proof of \cite[Lemma 4.2]{FeJ}, where a similar result is proved in the case of abelian $S$.

\begin{Lemma} \label{SRankFinite} Let $G$ be a small profinite group and $\varphi : K \to G$ a Frattini cover of $G$. Then one has $r_S(H) < \infty$ for all open subgroups $H$ of $K$ and all finite simple groups $S$.
\end{Lemma}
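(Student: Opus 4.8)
The goal is to show $r_S(H) < \infty$ for every open subgroup $H$ of $K$ and every finite simple group $S$, where $\varphi : K \to G$ is a Frattini cover of a small profinite group $G$. First I would reduce to showing $r_S(K) < \infty$, since $K$ is itself a Frattini cover of a small profinite group when restricted... actually the subtlety is that an open subgroup $H \leq K$ need not be a Frattini cover of anything obvious, so instead I would use the property recorded in Remark \ref{RemarkFrattiniCover} (ii): if $\Delta$ is a continuous quotient of $K$, there is a continuous quotient $\Gamma$ of $G$ with $\rk(\Delta) = \rk(\Gamma)$ and an epimorphism $\Delta \to \Gamma$. The relevant quotient to feed in is $\Delta := H/M_S(H)$ for an open $H \leq K$.

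**Key steps.** Fix an open subgroup $H$ of $K$ and a finite simple group $S$. If $H/M_S(H)$ is finite there is nothing to prove, so assume it is infinite; write $H/M_S(H) \cong S^\lambda$ for an infinite cardinal $\lambda = r_S(H)$, and set $\Delta := H/M_S(H)$. Now $M_S(H)$ need not be normal in $K$, so I cannot directly view $\Delta$ as a quotient of $K$; instead I pass to the core $N := \bigcap_{k \in K} k M_S(H) k^{-1}$, which is an open normal subgroup of $K$ contained in $M_S(H)$. Then $K/N$ is a continuous quotient of $K$, and by the Frattini-cover property there is a continuous quotient $\Gamma$ of $G$ with $\rk(K/N) = \rk(\Gamma)$ together with an epimorphism $K/N \to \Gamma$. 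Since $G$ is small and $\Gamma$ is a continuous finite-index quotient... wait, $\Gamma$ need only be a continuous quotient, not finite; but $\rk(\Gamma) = \rk(K/N)$ and $K/N$ contains $H/N$ as an open subgroup, which surjects onto $\Delta \cong S^\lambda$, so $\rk(K/N) \geq \lambda$. On the other hand, $\Gamma$ is a continuous quotient of the small group $G$, hence small, and $H/N \leq K/N \twoheadrightarrow \Gamma$ — here I need to descend: the image of $H/N$ in $\Gamma$ is an open subgroup $\Gamma_0$ of $\Gamma$ which still surjects onto a quotient containing $S^{\lambda'}$ for some infinite $\lambda'$ related to $\lambda$, and by Proposition \ref{SRankSmall} applied to the small group $G$, $r_S$ of any open subgroup of any quotient of $G$ is finite — contradiction once I pin down that $\lambda' = \lambda$ (or at least that $\lambda$ infinite forces $\lambda'$ infinite).

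**Where the real work is.** The delicate point — and I expect this to be the main obstacle — is controlling ranks and $S$-ranks through the two maps: the core passage $H \mapsto H/N$ (going \emph{up} to a quotient of $K$) and the Frattini-cover comparison $K/N \mapsto \Gamma$ (going \emph{over} to $G$). Lemma \ref{SRankEqualsRank} is the tool that makes this work: if $S^\lambda \to G'$ is an epimorphism and $G'$ is infinite, then $\rk(G') = r_S(G')$. So I would arrange to apply the Frattini-cover property not to $K/N$ but to the quotient $H/M_S(H) \cong S^\lambda$ directly — treating $S^\lambda$ as a continuous quotient of $H$, then using that $H$ (being open in $K$) is itself a Frattini cover of $\varphi(H)$ (an open subgroup of $G$) via the restriction of $\varphi$, provided $\ker(\varphi) \subseteq \Phi(H)$; this holds because $\ker(\varphi) \subseteq \Phi(K)$ and $\Phi(K) \cap H \subseteq \Phi(H)$ for open $H$. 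Then Remark \ref{RemarkFrattiniCover} (ii) applied to the Frattini cover $\varphi|_H : H \to \varphi(H)$ and the quotient $\Delta = S^\lambda$ yields a continuous quotient $\Gamma$ of $\varphi(H)$ with $\rk(\Gamma) = \rk(S^\lambda) = \lambda$ and an epimorphism $S^\lambda \to \Gamma$. By Lemma \ref{SRankEqualsRank}, $r_S(\Gamma) = \rk(\Gamma) = \lambda$. But $\Gamma$ is a continuous quotient of the open subgroup $\varphi(H)$ of the small group $G$, hence is itself small, so $r_S(\Gamma) < \infty$ by Proposition \ref{SRankSmall} (applied to the small group $\Gamma$, or to $G$ noting $\Gamma$ is a quotient of an open subgroup — one should phrase smallness of $\Gamma$ carefully, using that quotients of small groups are small). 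Therefore $\lambda < \infty$, i.e. $r_S(H) < \infty$, as desired. The one routine verification I am deferring is that $\Phi(K) \cap H \subseteq \Phi(H)$ for $H$ open in $K$, and that quotients of small profinite groups are small.
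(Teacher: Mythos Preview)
Your final approach has a genuine gap: the claim that $\Phi(K) \cap H \subseteq \Phi(H)$ for open $H \leq K$ is false, and consequently $\varphi|_H : H \to \varphi(H)$ need not be a Frattini cover. For a concrete counterexample take $K = \Z_p$ and $\varphi : \Z_p \to \Z/p\Z$ the quotient map; then $\ker(\varphi) = p\Z_p = \Phi(K)$, so $\varphi$ is a Frattini cover. With $H = p\Z_p$ (open of index $p$) one has $\Phi(H) = p^2\Z_p$, while $\ker(\varphi|_H) = H = p\Z_p \not\subseteq p^2\Z_p$. So the restriction $\varphi|_H$ is \emph{not} a Frattini cover, and you cannot invoke Remark \ref{RemarkFrattiniCover} (ii) for it. (Your earlier exploratory approach via the core also slips: you call $N = \bigcap_k k M_S(H) k^{-1}$ an \emph{open} normal subgroup of $K$, but if $r_S(H)$ is infinite then $M_S(H)$ already has infinite index in $H$, so $N$ is certainly not open.)

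The paper avoids this by never leaving the Frattini cover $K \to G$ itself. It first reduces to the case where $H$ is \emph{normal} in $K$: choose an open normal $H_0 \trianglelefteq K$ inside $H$ and use the inequality $r_S(H) \leq r_S(H_0) + r_S(H/H_0)$, where the second term is finite since $H/H_0$ is finite. Once $H \trianglelefteq K$, the subgroup $M_S(H)$ is characteristic in $H$ and hence normal in $K$, so $\Delta := K/M_S(H)$ is a continuous quotient of $K$. Now Remark \ref{RemarkFrattiniCover} (ii) applies directly to $K \to G$, yielding a quotient $\Gamma$ of $G$ with $\rk(\Gamma) = \rk(\Delta)$ and an epimorphism $\phi : \Delta \to \Gamma$. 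The image $\Gamma_0 := \phi(H/M_S(H))$ is then an open subgroup of $\Gamma$ onto which $S^\lambda$ surjects, and a rank-counting chain (using that open subgroups of infinite-rank profinite groups have the same rank, together with Lemma \ref{SRankEqualsRank}) gives $r_S(\Gamma_0) = \lambda$. Finally $\Gamma_0$ is a quotient of an open subgroup $G_0$ of $G$, so $r_S(\Gamma_0) \leq r_S(G_0) < \infty$ by Proposition \ref{SRankSmall}. The essential manoeuvre you were missing is the reduction to $H \trianglelefteq K$, which lets you form a quotient of $K$ rather than of $H$.
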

\begin{proof} Let $H$ be an open subgroup of $K$ and $S$ a finite simple group. Then $H$ contains an open normal subgroup $H_0$ of $K$. By \cite[Lemma 8.2.5 (d)]{RZ} we have
$$r_S(H) \leq r_S(H_0) + r_S(H/H_0).$$
Since $H/H_0$ is a finite group, it has finite $S$-rank, hence it suffices to show that $r_S(H_0)$ is finite. Thus, we can assume that $H$ is normal in $K$.

Let $N := M_S(H)$. Note that $N$ is invariant under all continuous automorphisms of $H$, hence $N$ is a normal subgroup of $K$. Let $\Delta := K / N$ and $\Delta_0 := H/N$. Since $\Delta$ is a continuous quotient of $K$, there exist a continuous quotient $\Gamma$ of $G$ with $\rk(\Delta) = \rk(\Gamma)$ and an epimorphism $\phi : \Delta \to \Gamma$. Let $\pi : G \to \Gamma$ be the quotient map, $\Gamma_0 := \phi(\Delta_0)$, and $G_0 := \pi^{-1}(\Gamma_0)$.
Observe that $(\Gamma : \Gamma_0) \leq (\Delta : \Delta_0) = (K : H) < \infty$, since $\phi$ is surjective.

Now assume that $\lambda := r_S(H)$ is infinite. Then
$$\rk(\Delta_0) = \rk(H/M_S(H)) = \rk(S^\lambda) = \lambda,$$
hence $\Delta_0$ has infinite rank. It follows from \cite[Proposition 2.5.5]{RZ} that $\Delta$ has infinite rank because $\Delta_0$ is open in $\Delta$. By \cite[Corollary 17.1.5]{FJ} this implies that $\rk(\Gamma) = \rk(\Delta) = \rk(\Delta_0) = \lambda$. Since $\Gamma_0$ is open in $\Gamma$ and $\Gamma$ has infinite rank, we find $\rk(\Gamma_0) = \rk(\Gamma)$. Then $\Gamma_0$ is infinite. It follows from Lemma \ref{SRankEqualsRank} that $\rk(\Gamma_0) = r_S(\Gamma_0)$, since $\Gamma_0$ is the image of $\Delta_0 \cong S^\lambda$ under $\phi$. Thus we have shown:
$$r_S(\Gamma_0) = \rk(\Gamma_0) = \rk(\Gamma) = \rk(\Delta) = \rk(\Delta_0) = \lambda.$$
Since $\Gamma_0$ is the image of $G_0$ under $\pi$, we have $r_S(\Gamma_0) \leq r_S(G_0)$. Furthermore, $G_0$ is open in $G$, hence $r_S(G_0)$ is finite by Proposition \ref{SRankSmall}. Then $\lambda = r_S(\Gamma_0) \leq r_S(G_0) < \infty$
contradicts the assumption that $\lambda$ is infinite.
\end{proof}

\begin{Theorem} \label{FrattiniCoverSmall} Every Frattini cover of a small profinite group is small.
\end{Theorem}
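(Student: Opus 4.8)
The plan is to combine the characterization of smallness from Proposition \ref{SRankSmall} with the $S$-rank estimate for Frattini covers obtained in Lemma \ref{SRankFinite}. By Proposition \ref{SRankSmall}, a profinite group is small precisely when all of its open subgroups have finite $S$-rank for every finite simple group $S$; Lemma \ref{SRankFinite} asserts exactly this property for every Frattini cover $\varphi : K \to G$ of a small profinite group $G$. So the theorem is essentially immediate: apply Lemma \ref{SRankFinite} to obtain $r_S(H) < \infty$ for all open $H \leq K$ and all finite simple $S$, then invoke Proposition \ref{SRankSmall} in the reverse direction to conclude that $K$ is small.

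The only point that requires a word of care is the edge case where $G$ is finite. If $G$ is finite, then $\Phi(G)$ is finite, so $\ker(\varphi) \subseteq \Phi(G)$ is finite, and hence $K$ is finite as well; every finite profinite group is trivially small, since it has only finitely many subgroups altogether. One could also simply note that Lemma \ref{SRankFinite} is stated without a finiteness hypothesis on $G$ and that Proposition \ref{SRankSmall} likewise applies to all profinite groups, so no separate argument is strictly necessary. I do not expect any genuine obstacle here: the substantive work has already been done in Lemma \ref{SRankFinite}, whose proof exploited the defining property of Frattini covers (that continuous quotients of $K$ dominate continuous quotients of $G$ of equal rank) together with the rank-theoretic fact, recorded in Lemma \ref{SRankEqualsRank}, that for an infinite quotient of a power of a finite simple group the rank coincides with the $S$-rank.

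Thus the proof reads: Let $\varphi : K \to G$ be a Frattini cover of a small profinite group $G$. By Lemma \ref{SRankFinite}, $r_S(H) < \infty$ for every open subgroup $H$ of $K$ and every finite simple group $S$. By Proposition \ref{SRankSmall}, $K$ is small.
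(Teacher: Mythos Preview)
Your core argument is correct and matches the paper's proof exactly: invoke Lemma \ref{SRankFinite} to obtain $r_S(H)<\infty$ for all open $H\leq K$ and all finite simple $S$, then apply Proposition \ref{SRankSmall}. One small correction to your aside on the finite case: a Frattini cover of a finite group need not be finite (for instance $\Z_p \to \Z/p\Z$ is a Frattini cover, since $\ker = p\Z_p = \Phi(\Z_p)$), so the sentence ``hence $K$ is finite as well'' is not justified; but as you already observe, no separate treatment of this case is needed.
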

\begin{proof} Let $G$ be a small profinite group, and let $\varphi : K \to G$ be a Frattini cover of $G$. By Proposition \ref{SRankSmall} it suffices to show that $r_S(H) < \infty$ for all open subgroups $H$ of $K$ and all finite simple groups $S$, which is the content of Lemma \ref{SRankFinite}
\end{proof}

\section{Elementary equivalence of strongly complete profinite groups}

\begin{Remark} Let $w( \textbf{x})$ be a group word in variables $\textbf{x} = (x_1, ..., x_n)$, see \cite[Section 17.5]{FJ}, and let $G$ be a group. \begin{enumerate}[(i)]

\item We denote by $w(G)$ the subgroup of $G$ generated by the elements of the form $w(\textbf{g})$ with $\textbf g \in G^n$, the \textit{verbal subgroup} corresponding to $w$ in $G$. Note that $w(G)$ is a normal subgroup of $G$.

\item Let $r \in \N$. We write $\length_w(G) \leq r$ if every element of $w(G)$ is of the form $w(\textbf{g}_1)^{\epsilon_1} \cdots w(\textbf{g}_r)^{\epsilon_r}$ with $\textbf{g}_1, ..., \textbf{g}_r \in G^n$ and $\epsilon_1, ..., \epsilon_r \in \{\pm 1\}$.

\end{enumerate}
\end{Remark}

In \cite[Lemma 1.2]{JL}, the following is proved:

\begin{Lemma} \label{VerbalSubgroupFormula} Let $w(x_1, ..., x_d)$ be a group word and $r \in \N$. Let $\varphi(y_1, ..., y_n)$ be a formula in the language of groups. Then there exists a formula $\varphi'(y_1, ..., y_n)$ in the language of groups such that, for all groups $G$ with $\length_w(G) \leq r$ and all $g_1, ..., g_n \in G$, one has
$$G \models \varphi'(g_1, ..., g_n) \iff G/w(G) \models \varphi(g_1 w(G), ..., g_n w(G)).$$
\end{Lemma}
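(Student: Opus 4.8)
The plan is to exhibit the quotient $G/w(G)$ as a structure interpreted in $G$, uniformly over all groups $G$ with $\length_w(G) \le r$, and to take $\varphi'$ to be the formula obtained from $\varphi$ by the standard syntactic translation along this interpretation.

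The hypothesis $\length_w(G) \le r$ enters at exactly one point: it makes the verbal subgroup definable by a single formula of the language of groups. Let $\theta(x)$ be the finite disjunction, over all sign vectors $\epsilon = (\epsilon_1, \ldots, \epsilon_r) \in \{\pm 1\}^r$, of the formulas
$$\exists \textbf{z}_1 \cdots \exists \textbf{z}_r \; \bigl( x = w(\textbf{z}_1)^{\epsilon_1} \cdots w(\textbf{z}_r)^{\epsilon_r} \bigr),$$
where $\textbf{z}_1, \ldots, \textbf{z}_r$ are disjoint $d$-tuples of fresh variables. Every element witnessed by one of these formulas lies in $w(G)$ by definition of the verbal subgroup, and conversely $\length_w(G) \le r$ says that every element of $w(G)$ has such a form; hence $G \models \theta(g) \iff g \in w(G)$ for every group $G$ with $\length_w(G) \le r$ and every $g \in G$. (Without a bound on the width, $w(G)$ is in general only a countable union of definable sets, so some such hypothesis is genuinely needed.) Since $w(G)$ is normal, the formula $\theta(x y^{-1})$ defines a congruence $\sim$ on $G$ whose quotient is precisely $G/w(G)$, the quotient map $g \mapsto g w(G)$ being surjective.

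I would then define, by recursion on the build-up of $\varphi$, its translation $\varphi^*$: for an atomic subformula $t_1 = t_2$, with $t_1, t_2$ group-terms in the variables at hand, put $(t_1 = t_2)^* := \theta(t_1 t_2^{-1})$ after renaming the bound variables of $\theta$ to avoid clashes with the variables of $\varphi$; let $*$ commute with the Boolean connectives; and set $(\exists y\, \psi)^* := \exists y\, \psi^*$ and $(\forall y\, \psi)^* := \forall y\, \psi^*$, with no relativization, since the universe of the interpreted structure is all of $G$. Taking $\varphi' := \varphi^*$, the asserted equivalence follows by a routine induction on $\varphi$, carried out simultaneously for all $G$ with $\length_w(G) \le r$ and all assignments: in the atomic case one uses that a group-term $t$ evaluated in $G/w(G)$ at cosets $h_j w(G)$ equals $t(h_1, \ldots)\, w(G)$, so that $G/w(G)$ satisfies $t_1 = t_2$ under the induced assignment exactly when $t_1 t_2^{-1}$ evaluates in $G$ to an element of $w(G)$, i.e. exactly when $G$ satisfies $\theta(t_1 t_2^{-1})$ under that assignment; the Boolean steps are immediate; and the quantifier steps use the surjectivity of $G \to G/w(G)$ to pass between an element $y \in G$ and its coset $y w(G)$. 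I do not expect a genuine obstacle here: apart from the observation about $\theta$, this is the standard bookkeeping of an interpretation argument, the one delicate point being the renaming of the bound variables of $\theta$ when it is substituted into atomic subformulas.
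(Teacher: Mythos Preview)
Your proposal is correct. The paper does not give its own proof of this lemma but simply cites \cite[Lemma 1.2]{JL}; the argument there is precisely the interpretation argument you outline, namely that the width bound $\length_w(G)\le r$ makes $w(G)$ uniformly definable by a single formula, so that $G/w(G)$ is uniformly interpretable in $G$ and $\varphi'$ is the syntactic translate of $\varphi$ along this interpretation.
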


In \cite[Proposition 5.2 (b)]{W}, the following is proved:

\begin{Lemma} \label{VerbalSubgroupLength} Let $G$ be a profinite group and $w$ a group word. Then $w(G)$ is closed in $G$ if and only if there exists $r \in \N$ with $\length_w(G) \leq r$.
\end{Lemma}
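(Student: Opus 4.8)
The plan is to prove the two implications separately. The direction ``bounded length $\Rightarrow$ closed'' is a short compactness argument, so the bulk of the work is the converse, which I would handle by a Baire category argument carried out inside $w(G)$.

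For the easy direction, say $w$ has $n$ variables, write $W := \{ w(\textbf{g}) : \textbf{g} \in G^n \}$ for the set of values of $w$ in $G$, and set $X := W \cup W^{-1}$. The word map $\textbf{g} \mapsto w(\textbf{g})$ is continuous and $G^n$ is compact, so $W$ is compact; hence $X$ is compact, and hence so is the product set $X^r := \{ x_1 \cdots x_r : x_1, \dots, x_r \in X \}$ for every $r \in \N$, being a continuous image of the $r$-fold power of $X$ under iterated multiplication. If $\length_w(G) \le r$, the definition of $\length_w$ gives $w(G) \subseteq X^r$, while $X^r \subseteq w(G)$ holds trivially, so $w(G) = X^r$ is compact and therefore closed in the Hausdorff group $G$.

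For the converse, suppose $w(G)$ is closed in $G$. Then $w(G)$, being a closed subgroup of the profinite group $G$, is itself a profinite group; in particular it is compact Hausdorff, hence a Baire space. Each $X^r$ is compact, hence closed, and since $1 = w(\textbf{1}) \in X$ the sets $X^r$ increase with $r$ and $w(G) = \bigcup_{r \ge 1} X^r$. By the Baire category theorem, applied inside $w(G)$, some $X^r$ is not nowhere dense, so, being closed, it has nonempty interior in $w(G)$: there are $x_0 \in X^r$ and an open neighbourhood of $x_0$ in $w(G)$ contained in $X^r$. Translating by $x_0^{-1}$, which also lies in $X^r$ because $X$, hence $X^r$, is symmetric, yields an open neighbourhood $V$ of $1$ in $w(G)$ with $V \subseteq X^r X^r = X^{2r}$. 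As $w(G)$ is profinite, $V$ contains an open subgroup $H$ of $w(G)$, so $H \subseteq X^{2r}$ and $m := [w(G) : H]$ is finite. Choosing coset representatives $g_1, \dots, g_m \in w(G) = \bigcup_{s \ge 1} X^s$ and an $s$ with $g_1, \dots, g_m \in X^s$, we obtain $w(G) = \bigcup_{i=1}^{m} g_i H \subseteq X^s X^{2r} = X^{s+2r} \subseteq w(G)$, so $w(G) = X^{s+2r}$ and hence $\length_w(G) \le s+2r$.

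The main obstacle is the converse, and within it the crucial point is converting the topological statement ``some $X^r$ is somewhere dense'' into the algebraic bound on $\length_w(G)$; this is exactly where profiniteness enters, through the fact that an open neighbourhood of the identity in a profinite group contains an open subgroup of finite index, whose finitely many cosets are each represented at bounded word-length. The remaining points are routine: that the Baire category theorem is applicable (it is, as $w(G)$ is compact Hausdorff, so metrizability is not needed), that each $X^r$ is closed (from compactness of $X$), and the harmless bookkeeping identifying products of $r$ elements of $X$ with expressions $w(\textbf{g}_1)^{\epsilon_1} \cdots w(\textbf{g}_r)^{\epsilon_r}$ as in the definition of $\length_w$.
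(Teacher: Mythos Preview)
Your proof is correct. Both directions are handled cleanly: the compactness argument for ``bounded length $\Rightarrow$ closed'' is the standard one, and the Baire category argument for the converse is carried out properly, including the points that $1 \in X$ (since $w(\mathbf{1})=1$ for any group word), that $X$ is symmetric so $x_0^{-1}\in X^r$, that $w(G)$ is itself profinite so open neighbourhoods of the identity contain open subgroups, and that the finitely many coset representatives lie in some $X^s$.

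As for comparison with the paper: the paper does not prove this lemma at all. It simply quotes the statement as \cite[Proposition~5.2~(b)]{W} and uses it as a black box. Your argument is in fact the standard proof one finds in the literature (and essentially the one in Wilson's exposition), so you have supplied what the paper omits rather than diverged from it.
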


Now we can prove that openness of a verbal subgroup is preserved under elementary equivalence. We follow the proofs of Claims A and B in the proof of \cite[Theorem 1.7]{JL}.

\begin{Lemma} \label{EquivalenceVerbalSubgroup} Let $w$ be a group word. Let $G$ and $H$ be profinite groups with $G \equiv H$. If $w(G)$ is open in $G$, then $w(H)$ is open in $H$ and $G/w(G) \cong H/w(H)$.
\end{Lemma}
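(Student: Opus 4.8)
The plan is to follow the arguments of Claims A and B in the proof of \cite[Theorem 1.7]{JL}: I will encode the relevant finite pieces of data about $w(G)$ by first-order sentences, transfer them to $H$ using $G \equiv H$, and read off both conclusions. Throughout, write $w = w(x_1, \ldots, x_d)$ and, for a group $\Gamma$ and $k \in \N$, let $W_k(\Gamma)$ denote the set of all products $w(\bar g_1)^{\epsilon_1} \cdots w(\bar g_k)^{\epsilon_k}$ with $\bar g_i \in \Gamma^d$ and $\epsilon_i \in \{\pm 1\}$.

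The key preliminary observation is that "$\length_w(\Gamma) \le r$" is first-order expressible. Since $w(1, \ldots, 1) = 1$, one has $W_k(\Gamma) \subseteq W_{k+1}(\Gamma)$ for all $k$, and $w(\Gamma) = \bigcup_{k} W_k(\Gamma)$; moreover $W_{k+1}(\Gamma) = W_k(\Gamma) \cdot W_1(\Gamma)$, so if $W_{r+1}(\Gamma) \subseteq W_r(\Gamma)$ then $W_k(\Gamma) = W_r(\Gamma)$ for all $k \ge r$, and hence $w(\Gamma) = W_r(\Gamma)$. Therefore $\length_w(\Gamma) \le r$ holds if and only if $W_{r+1}(\Gamma) \subseteq W_r(\Gamma)$. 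Now $W_r(\Gamma)$ is the set defined in $\Gamma$ by the formula $\psi_r(y)$ obtained by taking, over the finitely many sign patterns $\bar\delta \in \{\pm 1\}^r$, the disjunction of the existential formulas $\exists \bar y_1 \cdots \exists \bar y_r\, \bigl(y = w(\bar y_1)^{\delta_1} \cdots w(\bar y_r)^{\delta_r}\bigr)$; so the inclusion $W_{r+1}(\Gamma) \subseteq W_r(\Gamma)$ is expressed by a sentence $\sigma_r$ (a universal quantification over $(r+1)d$ variables, again with a finite disjunction over sign patterns, of $\psi_r$ applied to the corresponding product). Thus a group $\Gamma$ satisfies $\sigma_r$ exactly when $\length_w(\Gamma) \le r$, and in that case $\psi_r$ defines the normal subgroup $w(\Gamma)$. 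For $n \in \N$, let $\tau_n$ be the sentence asserting the existence of $z_1, \ldots, z_n$ with $\forall u\, \bigvee_{i=1}^n \psi_r(z_i^{-1} u)$; if $\Gamma \models \sigma_r \wedge \tau_n$, then $\Gamma$ is covered by $n$ cosets of $w(\Gamma)$, so $(\Gamma : w(\Gamma)) \le n$.

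Now I would run the argument. Since $w(G)$ is open, it is closed, so Lemma \ref{VerbalSubgroupLength} yields an $r \in \N$ with $\length_w(G) \le r$, i.e. $G \models \sigma_r$; and $n := (G : w(G))$ is finite, so $G \models \tau_n$. Because $G \equiv H$ we get $H \models \sigma_r \wedge \tau_n$. From $\sigma_r$ and Lemma \ref{VerbalSubgroupLength}, $w(H)$ is closed in $H$ and is defined by $\psi_r$; from $\tau_n$, $(H : w(H)) \le n < \infty$. A closed subgroup of finite index in a profinite group is open, so $w(H)$ is open in $H$. For the remaining assertion, fix an arbitrary sentence $\varphi$ in the language of groups and apply Lemma \ref{VerbalSubgroupFormula} (with no free variables) to obtain a sentence $\varphi'$ such that every group $\Gamma$ with $\length_w(\Gamma) \le r$ satisfies $\Gamma \models \varphi' \iff \Gamma/w(\Gamma) \models \varphi$. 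Applying this to $G$ and to $H$ and invoking $G \equiv H$ gives $G/w(G) \models \varphi \iff H/w(H) \models \varphi$; as $\varphi$ was arbitrary, $G/w(G) \equiv H/w(H)$. Both quotients are finite, and a finite group is pinned down up to isomorphism by a single sentence (describing its multiplication table), so $G/w(G) \cong H/w(H)$.

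The one step that needs genuine care is the encoding in the second paragraph: a priori "$\length_w(\Gamma) \le r$" refers to the subgroup generated by all values of $w$, which is not obviously a bounded-quantifier condition, and the reduction to the single inclusion $W_{r+1}(\Gamma) \subseteq W_r(\Gamma)$ is exactly what makes it first-order. After that, the proof is a routine transfer across $G \equiv H$ using the two quoted lemmas, together with the standard facts that a closed finite-index subgroup of a profinite group is open and that elementarily equivalent finite groups are isomorphic.
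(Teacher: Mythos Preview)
Your proof is correct and follows essentially the same approach as the paper's: encode bounded $w$-length as a first-order condition, transfer it across $G \equiv H$ to get $w(H)$ closed, and then invoke Lemma~\ref{VerbalSubgroupFormula} to obtain $G/w(G) \equiv H/w(H)$ and hence isomorphism of the finite quotients. Your version is marginally tidier in two places---you express $\length_w \le r$ by the single inclusion $W_{r+1} \subseteq W_r$ rather than by a scheme of sentences ranging over all $s > r$, and you transfer the finite index directly via your sentence $\tau_n$ instead of reading it off a posteriori from $G/w(G) \cong H/w(H)$---but the underlying strategy is identical.
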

\begin{proof} Suppose that $w(G)$ is open in $G$. By Lemma \ref{VerbalSubgroupLength} we have $\length_w(G) \leq r$ for some $r \in \N$. We claim that $\length_w(H) \leq r$.

Let $x_1, ..., x_n$ be the variables occurring in $w$. Consider the formula $\varphi(x)$ given by
$$\exists \textbf h_1, ..., \textbf h_r \bigvee_{\epsilon \in \{\pm1\}^r} x = w(\textbf h_1)^{\epsilon_1} \cdots w(\textbf h_r)^{\epsilon_r},$$
where the $\textbf h_j$ are $n$-tuples of variables. Because $\length_w(G) \leq r$, we have $G \models \varphi(g)$ if and only if $g \in w(G)$, for all $g \in G$.

Let $s > r$ be an integer and let $\delta \in \{\pm1\}^s$. By the above, the following sentence holds in $G$, where the $\textbf g_i$ are $n$-tuples of variables:
$$\forall \textbf g_1, ..., \textbf g_s \ \ \varphi(w(\textbf g_1)^{\delta_1} \cdots w(\textbf g_s)^{\delta_s}).$$
Because of $G \equiv H$, this sentence then holds in $H$ as well. Since this is true for all $s > r$ and all $\delta \in \{\pm1\}^s$, it follows that $\length_w(H) \leq r$ and so $w(H)$ is closed by Lemma \ref{VerbalSubgroupLength}.

Since $w(G)$ is open in $G$, the group $G/w(G)$ is finite. Because $\length_w(G) \leq r$ and $\length_w(H) \leq r$, it follows from Lemma \ref{VerbalSubgroupFormula} and $G \equiv H$ that $G/w(G) \equiv H/w(H)$. Since $G/w(G)$ is finite, it follows that $G/w(G) \cong H/w(H)$. Then $w(H)$ has finite index in $H$, hence $w(H)$ is open in $H$.
\end{proof}

Now we can establish a sufficient condition for two elementarily equivalent profinite groups to be isomorphic. The proof of the next statement is conceptually the same as the proof of \cite[Theorem 1.7]{JL}, where the same conclusion is derived in the case of finitely generated profinite groups.

\begin{Proposition} \label{IsomorphismCondition} Let $G$ and $H$ be profinite groups with $G \equiv H$. Suppose that $G$ is small and that for each finite group $A$ there exists a group word $w$ with $w(A) = \{1\}$ such that $w(G)$ is open in $G$. Then $G \cong H$.
\end{Proposition}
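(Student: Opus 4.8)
The plan is to deduce the isomorphism from the classification of small profinite groups by their finite quotients, namely that a small profinite group is determined up to isomorphism by the set of finite groups occurring as a quotient by an open normal subgroup (\cite[Proposition 16.10.7]{FJ}). So I would aim to establish two things: that $G$ and $H$ have the same continuous finite quotients, and that $H$ is small. I would deliberately avoid trying to build an isomorphism directly out of the isomorphisms $G/w(G) \cong H/w(H)$ supplied by Lemma \ref{EquivalenceVerbalSubgroup}, since these are not canonical and need not be compatible with the transition maps between different verbal quotients; routing through the classification sidesteps that difficulty.

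The first step is to observe that $H$ inherits the hypothesis on verbal subgroups. If $A$ is a finite group and $w$ is a word with $w(A) = \{1\}$ and $w(G)$ open in $G$, then by Lemma \ref{EquivalenceVerbalSubgroup} the subgroup $w(H)$ is open in $H$ as well; applying the same lemma with the roles of $G$ and $H$ exchanged (legitimate because $\equiv$ is symmetric) shows conversely that $w(G)$ is open whenever $w(H)$ is. Write $W$ for the set of words $w$ with $w(G)$ — equivalently $w(H)$ — open. Then for every finite group $A$ there is some $w \in W$ with $w(A) = \{1\}$, and $G/w(G) \cong H/w(H)$ for every $w \in W$. Consequently the subgroups $w(G)$ with $w \in W$ form a basis of open neighbourhoods of $1$ in $G$: given an open normal subgroup $N$ of $G$, apply the hypothesis to $A = G/N$ to obtain $w \in W$ with $w(G) \subseteq N$. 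Symmetrically, the subgroups $w(H)$ with $w \in W$ form a basis of open neighbourhoods of $1$ in $H$.

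Next I would compare the finite quotients. If $Q$ is a continuous finite quotient of $G$, choose $w \in W$ with $w(Q) = \{1\}$; since an epimorphism carries $w(G)$ onto $w(Q)$, the subgroup $w(G)$ lies in the kernel of $G \to Q$, so $Q$ is a quotient of $G/w(G) \cong H/w(H)$ and hence of $H$. By the symmetry established above the reverse inclusion holds as well, so $G$ and $H$ have exactly the same continuous finite quotients.

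The last step, which I expect to be the delicate one, is to show that $H$ is small. Fix a finite group $Q$ and let $c$ be the number of open normal subgroups $N$ of $G$ with $G/N \cong Q$, which is finite because $G$ is small. For each $w \in W$, the open normal subgroups $M$ of $H$ that contain $w(H)$ and satisfy $H/M \cong Q$ correspond, via $M \mapsto M/w(H)$ together with the isomorphism $H/w(H) \cong G/w(G)$, to the open normal subgroups of $G$ containing $w(G)$ with quotient isomorphic to $Q$, and there are at most $c$ of the latter. Since every open normal subgroup of $H$ contains some $w(H)$ with $w \in W$ and the family of these $w(H)$ is directed downward, the set of open normal subgroups $M$ of $H$ with $H/M \cong Q$ is a directed union of sets of size at most $c$, hence has size at most $c$. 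As in the proof of Proposition \ref{SRankSmall}, a bound of this kind on the number of open normal subgroups with each prescribed finite quotient suffices to conclude that $H$ is small. Then $G$ and $H$ are small profinite groups with the same finite quotients, and $G \cong H$ follows from \cite[Proposition 16.10.7]{FJ}.
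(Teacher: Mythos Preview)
Your argument is correct and follows the same route as the paper: use Lemma \ref{EquivalenceVerbalSubgroup} to show that $G$ and $H$ have the same continuous finite quotients, then conclude via \cite[Proposition 16.10.7]{FJ}. Your third step, establishing that $H$ is small, is correct but unnecessary --- the cited proposition only requires \emph{one} of the two groups to be small --- and the paper's proof accordingly omits it.
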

\begin{proof} Since $G$ is small, it suffices by \cite[Proposition 16.10.7]{FJ} to show that $G$ and $H$ have the same continuous finite quotients. Let $A$ be a finite group. We claim that $A$ is a continuous quotient of $G$ if and only if $A$ is a continuous quotient of $H$.

Let $w$ be a group word with $w(A) = \{1\}$ such that $w(G)$ is open in $G$. From Lemma \ref{EquivalenceVerbalSubgroup} it follows that $w(H)$ is open in $H$ and $G/w(G) \cong H/w(H)$.

Now we show that $A$ is a continuous quotient of $H$ if and only if it is a quotient of $H/w(H)$. Suppose that $N$ is an open normal subgroup of $H$ such that $H/N \cong A$. We have $w(A) = \{1\}$ by choice of $w$ and so $w(H/N) = \{1\}$, in other words $w(H) \subseteq N$. Then $A \cong H/N \cong \frac {H/w(H)}{N/w(H)}$ is a quotient of $H/w(H)$. Conversely, if $A$ is a quotient of $H/w(H)$, then $A$ is a continuous quotient of $H$, since $w(H)$ is open in $H$. Observe that the same statement is true when replacing $H$ with $G$.

From $G/w(G) \cong H/w(H)$ it follows that $A$ is either a quotient of both $G/w(G)$ and $H/w(H)$ or of neither. Thus, $A$ is a continuous quotient of $G$ if and only if it is a continuous quotient of $H$.
\end{proof}

In \cite{JL}, the existence of such group words as in Proposition \ref{IsomorphismCondition} is derived from \cite[Theorem 1.2]{NS} and the assumption that one of the profinite groups is finitely generated. We derive the existence of such group words for strongly complete profinite groups from Theorem \ref{CharStronglyComplete}.

\begin{Remark} Let $\mathcal V$ be a class of groups. Then $\mathcal V$ is a \textit{variety} if there exists a set $W$ of group words such that a group $G$ lies in $\mathcal V$ if and only if $w(G) = \{1\}$ for all $w \in W$; we say that $\mathcal V$ is the \textit{variety defined by $W$}. The \textit{verbal subgroup of $G$ corresponding to $\mathcal V$} is the subgroup of $G$ generated by $w(G)$ for all $w \in W$. (Note that this depends only on $\mathcal V$, not on $W$.)

Let $G$ be a group, and let $W$ be the set of all group words $w$ with $w(G) = \{1\}$. The \textit{variety generated by $G$} is the variety defined by $W$.
\end{Remark}

The following characterization was proved in \cite[Theorem 2]{SW}.

\begin{Theorem} \label{CharStronglyComplete} Let $G$ be a compact Hausdorff group. The following are equivalent: \begin{enumerate}[(i)]
\item Every subgroup of $G$ of finite index is open.
\item $G$ has only finitely many subgroups of index $n$ for each $n \in \N$.
\item $G$ has only countably many subgroups of finite index.
\item $V$ is open in $G$ whenever $V$ is a verbal subgroup of $G$ corresponding to the variety generated by a finite group.
\end{enumerate}
\end{Theorem}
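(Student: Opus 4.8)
The plan is to prove the four conditions equivalent through the cycle (iv) $\Rightarrow$ (i) $\Rightarrow$ (ii) $\Rightarrow$ (iii) $\Rightarrow$ (i) together with (i) $\Rightarrow$ (iv), organised into two packages: the ``counting'' equivalences (i) $\Leftrightarrow$ (ii) $\Leftrightarrow$ (iii), which are topological, and the ``verbal'' equivalence (i) $\Leftrightarrow$ (iv). First I would reduce to the profinite case. The connected component $G^0$ is divisible, so it has no proper subgroup of finite index; hence $G^0$ is contained in every finite-index subgroup of $G$, and since any quotient in a variety generated by a finite group has finite exponent, the divisible image of $G^0$ there is trivial, so $G^0$ lies in every verbal subgroup of the type occurring in (iv). Consequently each of (i)--(iv) holds for $G$ if and only if it holds for the profinite group $G/G^0$, where the width criterion of Lemma \ref{VerbalSubgroupLength} is available. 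I also record the elementary fact, used throughout, that a finite-index subgroup of a compact group is open if and only if it is closed.

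The conceptual heart, and the cleanest link, is (iv) $\Rightarrow$ (i). Given a subgroup $H$ of finite index, put $N := \bigcap_{g \in G} gHg^{-1}$, a normal subgroup of finite index, and let $A := G/N$, a finite group. Let $\mathcal V$ be the variety generated by $A$ and $V$ the corresponding verbal subgroup of $G$. Since $A = G/N$ satisfies all of its own laws, every defining word $w$ of $\mathcal V$ has $w(G) \subseteq N$, whence $V \subseteq N \subseteq H$. By (iv) the subgroup $V$ is open, and therefore so is $H$. This step is purely formal and needs neither Oates--Powell nor Nikolov--Segal.

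For the counting package, (ii) $\Rightarrow$ (iii) is immediate (a countable union of finite sets). For (iii) $\Rightarrow$ (i) I would run a Baire-category argument: (iii) forces $G$ to have only countably many open subgroups, hence a countable basis at the identity, so $G$ is a compact metrizable group; for a finite-index subgroup $H$ one passes to $\bar H$, which is open, and then rules out a proper dense finite-index subgroup, the countability hypothesis being exactly what prevents such a subgroup (a non-meager subgroup with the Baire property is open, and a proper dense finite-index subgroup would force uncountably many finite-index subgroups). The remaining link (i) $\Rightarrow$ (ii) I would prove contrapositively: if some index $n$ is attained by infinitely many subgroups, then passing to cores and using the pigeonhole principle yields a fixed finite group $F$ and infinitely many open normal subgroups with quotient $F$, whose diagonal map produces an infinite closed subdirect product $C \le F^{\N}$. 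Such a $C$ has finite exponent, and an infinite compact group of finite exponent has a non-open subgroup of finite index; pulling this back along the quotient $G \to C$ contradicts (i).

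The substantive implication, and the \textbf{main obstacle}, is obtaining (iv) from strong completeness, i.e.\ (i) $\Rightarrow$ (iv): one must show that the verbal subgroup $V$ attached to $\mathcal V(A)$ is open, which I would split into \emph{finite index} and \emph{closedness}. Finite index follows once the maximal pro-$\mathcal V(A)$ quotient $G/\bar V$ is seen to be finite: this quotient is again strongly complete (continuous quotients of strongly complete groups are strongly complete, since a compact-to-Hausdorff surjection is a quotient map) and has exponent dividing $\exp(A)$, so the same bounded-exponent dichotomy used above forces it to be finite. Closedness of $V$ is where the deep input is unavoidable: by Oates--Powell \cite[Corollary 52.12]{Nm} the variety $\mathcal V(A)$ has a finite basis $w_1,\dots,w_k$, so $V = w_1(G)\cdots w_k(G)$ is a product of finitely many single-word verbal subgroups, and by Lemma \ref{VerbalSubgroupLength} each $w_i(G)$ is closed precisely when it has bounded width (a finite product of closed, hence compact, normal subgroups is then closed). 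I expect the delicate point to be establishing this bounded width for a group that need not be finitely generated: the uniform width bounds are the content imported from Nikolov--Segal \cite{NS}, and the crux is to verify that the counting hypotheses (i)--(iii), rather than finite generation, supply the uniformity those bounds require.
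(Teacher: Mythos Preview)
The paper does not prove Theorem \ref{CharStronglyComplete}; it is quoted verbatim from Smith--Wilson \cite[Theorem~2]{SW} and used as a black box. There is therefore no proof in the paper against which to compare your proposal.

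A brief comment on the proposal itself. Your treatment of (iv) $\Rightarrow$ (i) and of the counting implications is along standard lines and essentially correct. The substantive direction, as you rightly identify, is (i) $\Rightarrow$ (iv), and here your plan has a gap. You split the task into ``finite index'' and ``closedness'', but your finite-index step only shows that $\bar V$ has finite index (via the bounded-exponent profinite quotient $G/\bar V$), not that $V$ itself does; so closedness of $V$ is still carrying the whole load. For closedness you invoke bounded width and \cite{NS}, but the Nikolov--Segal width bounds are proved for \emph{finitely generated} profinite groups, and you offer no mechanism by which the counting hypotheses (i)--(iii) would substitute for finite generation---you flag this as ``the crux'' and leave it open, which is precisely the gap. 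Note also that \cite{SW} appeared in 2003, four years before \cite{NS}, so Smith--Wilson's own argument cannot rest on Nikolov--Segal. A cleaner way to organise the implication, once you are assuming (i), is to observe that it suffices to prove $(G:V)<\infty$ as an abstract index, since (i) then gives openness directly and closedness becomes irrelevant; establishing that abstract finite index (not merely $(G:\bar V)<\infty$) is where the genuine work in \cite{SW} lies.
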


First note that we can deduce the following:

\begin{Corollary} \label{StronglyCompleteSmall} Every strongly complete profinite group is small.
\end{Corollary}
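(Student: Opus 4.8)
The plan is to read off the result directly from Theorem \ref{CharStronglyComplete}. A profinite group $G$ is in particular a compact Hausdorff group, so the theorem applies to it. By definition, $G$ being strongly complete means precisely that condition (i) holds, i.e. every subgroup of $G$ of finite index is open.

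Now I would invoke the equivalence (i) $\Leftrightarrow$ (ii): strong completeness gives that $G$ has only finitely many subgroups of index $n$ for each $n \in \N$. The only point to observe is that this is an a priori stronger statement than smallness: smallness only asks for finitely many \emph{open} subgroups of index $n$, and every open subgroup of index $n$ is in particular a subgroup of index $n$. Hence the set of open subgroups of $G$ of index $n$ is contained in the finite set of all subgroups of $G$ of index $n$, so it is finite. Since $n$ was arbitrary, $G$ is small.

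There is essentially no obstacle here; the content of the corollary is entirely contained in Theorem \ref{CharStronglyComplete}, and the argument is just the elementary remark that ``finitely many subgroups of index $n$'' implies ``finitely many open subgroups of index $n$''. (Alternatively, one could cite the equivalence of (i) and (iii) and note that a profinite group with only countably many finite-index subgroups has, for each $n$, at most countably many open subgroups of index $n$, each of which is determined by the finite quotient it induces together with finitely much extra data — but going through (ii) is cleaner and avoids any such detour.)
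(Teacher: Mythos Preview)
Your argument is correct and matches the paper's intent: the corollary is stated without proof immediately after Theorem \ref{CharStronglyComplete}, so it is meant to be read off from the equivalence (i) $\Leftrightarrow$ (ii) exactly as you do. The observation that finitely many subgroups of index $n$ trivially bounds the number of open ones is the only thing to check.
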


\begin{Remark} One might ask whether the converse of this statement is true, i.e. if every small profinite group is strongly complete. This question is answered negatively in \cite[Proposition 27]{N} and \cite[Theorem 1]{S}, where counter-examples are constructed.
\end{Remark}

Regarding the fourth equivalent condition, the following theorem of Oates and Powell is of importance to us, see \cite[Corollary 52.12]{Nm}:

\begin{Theorem} \label{FiniteBasis} Let $A$ be a finite group and $\mathcal V$ the variety generated by $A$. Then there exists a finite set of group words $W$ such that $\mathcal V$ is the variety defined by $W$.
\end{Theorem}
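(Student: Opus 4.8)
This is the Oates--Powell finite basis theorem; in the present paper it is quite properly invoked by citing \cite[Corollary~52.12]{Nm}, and a genuine proof is long, so I will only outline the strategy of \cite[Chapter~5]{Nm}. The plan is: reduce, via the theory of critical groups, to a bound on the orders of the critical groups lying in $\mathcal V$, and then extract a finite basis from such a bound.

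First one records that $\mathcal V = \operatorname{var}(A)$ is locally finite: an $n$-generated group in $\mathcal V$ is a quotient of the relatively free group $F_n(\mathcal V)$, which embeds into a power of $A$ indexed by its (finitely many) homomorphisms to $A$, hence is finite. A locally finite variety is generated by the critical groups it contains: a finite group $G$ that is not critical lies in the variety generated by its proper sections, all of which are smaller and again in $\mathcal V$, so induction on $|G|$ shows $\mathcal V$ is generated by its critical groups.

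The heart of the proof, and the step I expect to be the main obstacle, is to produce an integer $N = N(A)$ bounding the order of every critical group $C \in \mathcal V$. Here one uses that a critical group is monolithic (it has a unique minimal normal subgroup), that $\exp(C)$ divides $e := \exp(A)$, and that $C \in \mathcal V$. One then bounds in turn: the nilpotency class, the number of generators, and hence the order of the Fitting subgroup $F(C)$ (nilpotent groups of bounded class, exponent and rank having bounded order); the number and size of the chief factors of $C$, controlled by a representation-theoretic estimate on the action of $C$ on them; and thereby the order of $C$ itself. This is exactly the extended combinatorial and representation-theoretic analysis of \cite{Nm}, and I see no shortcut.

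Given the bound $N$, only finitely many critical groups occur in $\mathcal V$ up to isomorphism, and it remains to turn this into an explicit finite basis. Since $\mathcal V$ is locally finite, each $F_d(\mathcal V)$ is finite, so the verbal subgroup $\mathcal V(F_d)$ has finite index in the free group $F_d$, hence is finitely generated as a subgroup by Nielsen--Schreier, hence finitely generated as a fully invariant subgroup; so for each $d$ the laws of $\mathcal V$ in $d$ variables follow from finitely many of them. One then shows, using the bound $N$, that for $d$ large enough in terms of $N$ the laws of $\mathcal V$ in at most $d$ variables already define $\mathcal V$ — the guiding idea being that membership in $\mathcal V$ is detected on the critical groups, each of order at most $N$, where only boundedly many substitutions are available, so only laws in boundedly many variables are relevant. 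Making this last reduction rigorous is the second place where real care is needed, and the details are again those of \cite{Nm}; combined with the previous paragraph it yields a finite set of words $W$ with $\mathcal V$ the variety defined by $W$.
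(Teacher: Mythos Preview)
Your proposal is appropriate: the paper does not prove this theorem at all but simply imports it as \cite[Corollary~52.12]{Nm}, and you correctly identify this at the outset. There is therefore no ``paper's own proof'' to compare against beyond the bare citation; your outline of the Oates--Powell argument via critical groups, the order bound on critical groups in $\mathcal V$, and the finite-basis extraction is a faithful summary of the strategy in \cite[Chapter~5]{Nm} and goes well beyond what the paper itself supplies.
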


Combining Proposition \ref{IsomorphismCondition} with Theorem \ref{CharStronglyComplete} (iv) then yields the following:

\begin{Theorem} \label{EquivalentStronglyComplete} Let $G$ and $H$ be profinite groups with $G \equiv H$. If one of them is strongly complete, then $G \cong H$.
\end{Theorem}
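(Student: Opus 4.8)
The plan is to reduce the statement to Proposition~\ref{IsomorphismCondition}. Since the relation $G \equiv H$ is symmetric, I may assume that $G$ is the strongly complete one, so that Corollary~\ref{StronglyCompleteSmall} gives that $G$ is small. The only remaining hypothesis of Proposition~\ref{IsomorphismCondition} to verify is then the verbal one: for every finite group $A$, I must exhibit a single group word $w$ with $w(A) = \{1\}$ whose verbal subgroup $w(G)$ is open in $G$.

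To produce such a word, I would start from the variety $\mathcal V$ generated by $A$. By the Oates--Powell theorem (Theorem~\ref{FiniteBasis}), $\mathcal V$ is the variety defined by a \emph{finite} set $W = \{w_1, \dots, w_k\}$ of group words. The verbal subgroup $V$ of $G$ corresponding to $\mathcal V$ is then generated by $w_1(G), \dots, w_k(G)$, and since $G$ is strongly complete, the implication (i)$\Rightarrow$(iv) of Theorem~\ref{CharStronglyComplete} shows that $V$ is open in $G$. It remains to collapse the finite family $w_1, \dots, w_k$ into one word with the same verbal subgroup. I would rename variables so that the $w_i$ use pairwise disjoint variable tuples, and set $w := w_1 w_2 \cdots w_k$, the juxtaposition on these disjoint tuples. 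Then every value of $w$ is visibly a product of values of the $w_i$, so $w(G) \subseteq V$; conversely, setting all variables outside the $i$-th block equal to $1$ (and using that a group word vanishes at the identity) shows that each generator $w_i(\mathbf g)$ of $w_i(G)$ is itself a value of $w$, so $w_i(G) \subseteq w(G)$ and hence $V \subseteq w(G)$. Thus $w(G) = V$, which is open. Finally $w(A) = \{1\}$: since $A$ lies in the variety it generates and $\mathcal V$ is defined by $W$, each $w_i(A) = \{1\}$, so every value of $w$ on a tuple from $A$ is a product of trivial elements.

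With such a word $w$ available for each finite group $A$, the hypotheses of Proposition~\ref{IsomorphismCondition} are met for $G$, and that proposition yields $G \cong H$. I expect the only point requiring genuine care to be the passage from the finite basis $W$ to a single word $w$ with $w(G)$ equal to the entire verbal subgroup $V$; this is precisely where the finiteness of $W$, i.e.\ the Oates--Powell theorem, is indispensable, since an infinite defining set could not be assembled into one group word. The remaining verifications---that $w$ kills $A$, that $w(G) = V$, and that everything fits the hypotheses of Proposition~\ref{IsomorphismCondition}---are routine bookkeeping.
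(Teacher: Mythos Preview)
Your proposal is correct and follows essentially the same route as the paper: assume $G$ is strongly complete, use Corollary~\ref{StronglyCompleteSmall} for smallness, and for each finite $A$ combine Theorem~\ref{CharStronglyComplete}(iv) with the Oates--Powell finite basis $\{w_1,\dots,w_k\}$ for the variety generated by $A$, concatenating the $w_i$ on disjoint variable tuples into a single word $w$ so that $w(G)$ coincides with the (open) verbal subgroup of that variety and $w(A)=\{1\}$, then invoke Proposition~\ref{IsomorphismCondition}. Your explicit verification of both inclusions $w(G)=V$ via specialising the unused blocks to the identity is a nice touch that the paper leaves implicit.
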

\begin{proof} Suppose that $G$ is strongly complete. Then $G$ is small by Corollary \ref{StronglyCompleteSmall}. We want to apply Proposition \ref{IsomorphismCondition}, hence we have to show that for each finite group $A$ there exists a group word $w$ with $w(A) = \{1\}$ such that $w(G)$ is open in $G$.

Let $A$ be a finite group. Let $\mathcal V$ be the variety generated by $A$. By Theorem \ref{FiniteBasis} there exists a finite set of group words $W$ such that $\mathcal V$ is the variety defined by $W$. Write $W = \{w_1, ..., w_n\}$ and set $w(\textbf x_1, ..., \textbf x_n) := w_1(\textbf x_1) \cdots w_n(\textbf x_n)$, where the $\textbf x_i$ are disjoint tuples of variables of suitable length. Since $\mathcal V$ is the variety generated by $A$, we have $w(A) = \{1\}$ and $w(G)$ is the verbal subgroup of $G$ corresponding to $\mathcal V$. Since $G$ is strongly complete, $w(G)$ is open in $G$ by Theorem \ref{CharStronglyComplete} (iv).
\end{proof}

\begin{Remark} The proof of Theorem \ref{EquivalentStronglyComplete} shows that every strongly complete profinite group satisfies the assumptions of Proposition \ref{IsomorphismCondition}. Conversely, the proof of Proposition \ref{IsomorphismCondition} shows that every profinite group which satisfies these assumptions is strongly complete: if $G$ is such a group and $H$ is a subgroup of finite index, then $N := \bigcap_{g \in G} g^{-1} H g$ is normal in $G$ of finite index and is contained in $H$. Then $G/N$ is a finite group, hence there exists a group word $w$ with $w(G/N) = \{1\}$ such that $w(G)$ is open in $G$. It follows that $w(G) \subseteq N \subseteq H$, hence $H$ is open in $G$.
\end{Remark}

The following statement gives examples of strongly complete groups.

\begin{Proposition} \label{AcceptableStronglyComplete} Let $(G_i)_{i \in I}$ be a family of strongly complete groups such that for every prime number $p$ there are only finitely many $i \in I$ with $p \mid \# G_i$ (where $\# G_i$ denotes the order of $G_i$, see \cite[Section 2.3]{RZ}). Then $G := \prod_{i \in I} G_i$ is strongly complete.
\end{Proposition}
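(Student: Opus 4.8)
The plan is to verify condition (iv) of Theorem~\ref{CharStronglyComplete}: it suffices to show that for every finite group $A$ the verbal subgroup $V$ of $G$ corresponding to the variety $\mathcal V$ generated by $A$ is open in $G$. So fix a finite group $A$, let $\pi$ be the (finite) set of primes dividing $\# A$, and let $e$ be the exponent of $A$; by Cauchy's theorem and Lagrange's theorem the primes dividing $e$ are exactly those in $\pi$. By the hypothesis on $(G_i)_{i\in I}$ the index set $I_0 := \{ i\in I : p \mid \# G_i \text{ for some } p\in\pi\}$ is finite. Write $G = G_{I_0}\times G'$ with $G_{I_0} := \prod_{i\in I_0} G_i$ and $G' := \prod_{i\notin I_0} G_i$; then no prime of $\pi$ divides $\# G'$, so $\# G'$ is coprime to $e$.

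The crux will be to show that $V$ already contains the open subgroup $U := G' \times \prod_{i\in I_0} V_i$, where $V_i$ denotes the verbal subgroup of $G_i$ corresponding to $\mathcal V$. That $U$ is open is easy: $U = \prod_{i\in I} U_i$ with $U_i = G_i$ for every $i\notin I_0$, that is, for all but finitely many $i$, and $U_i = V_i$ for $i\in I_0$, and each $V_i$ is open in $G_i$ because $G_i$ is strongly complete (apply Theorem~\ref{CharStronglyComplete}, (i)$\Rightarrow$(iv), to $G_i$). That $U\subseteq V$ has two parts. For the $G_{I_0}$-part: each $G_i$ is a subgroup of $G$, so $V_i\subseteq V$, and since $I_0$ is finite the subgroup $\prod_{i\in I_0} V_i$ of $G$ (elements supported on $I_0$) is generated by these finitely many $V_i$, hence lies in $V$. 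For the $G'$-part, i.e. the claim $G'\subseteq V$: the word $x^e$ is a law of $A$ (because $e$ is the exponent of $A$), so it belongs to a defining set of $\mathcal V$, and therefore the verbal subgroup of any group $K$ corresponding to $\mathcal V$ contains $\{k^e : k\in K\}$; applying this to $K = G'$ and using that $x\mapsto x^e$ is surjective on $G'$ (because $\gcd(e,\# G') = 1$) shows that the verbal subgroup of $G'$ corresponding to $\mathcal V$ equals $G'$, and since $G'$ is a subgroup of $G$ this verbal subgroup is contained in $V$. Hence $G'\subseteq V$, so $U\subseteq V$ and $V$ is open; by Theorem~\ref{CharStronglyComplete} this shows $G$ is strongly complete.

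Two points need care. First, the surjectivity of $x\mapsto x^e$ on the profinite group $G'$: on every finite continuous quotient $Q$ of $G'$ one has $\gcd(e,|Q|)=1$, so raising to the $e$-th power is a bijection of $Q$; since $x\mapsto x^e$ has compact (hence closed) image in $G'$ and that image surjects onto every such $Q$, it is both dense and closed, so it is all of $G'$. Second, the $G_{I_0}$-part genuinely uses that $I_0$ is finite: for a finite direct product the product of the subgroups $V_i$ coincides with the subgroup they generate (because a given $w$-value of $G_i$, placed in the $i$-th coordinate with $1$ in the others, is still a $w$-value of the product, group words vanishing on tuples of $1$'s), whereas for infinitely many factors the inclusion $\prod_{i\in I_0} V_i\subseteq V$ may fail. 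The main conceptual obstacle is precisely this splitting idea: one separates off the finitely many factors $G_i$ whose order shares a prime with $\# A$ and disposes of the infinitely many remaining factors in one stroke, via the observation that a profinite group of $\pi'$-order equals its own verbal subgroup for the variety generated by $A$ — which is what makes the argument go through without any uniform bound on verbal lengths across the factors. (Note that the theorem of Oates and Powell is not needed for this argument.)
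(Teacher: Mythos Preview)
Your argument is correct. Both your proof and the paper's use the same structural idea---split off the finitely many factors whose order shares a prime with the relevant integer and dispose of the remaining ``coprime'' part in one stroke---but you verify condition~(iv) of Theorem~\ref{CharStronglyComplete} while the paper verifies condition~(i) directly. Concretely: the paper takes an arbitrary finite-index subgroup $H\leq G$, sets $I_n=\{i: \gcd(n,\#G_i)\neq 1\}$ (finite), writes $G=K_n\times K_0$, and shows $H\cap K_0=K_0$ via the divisibility fact $(K_0:H\cap K_0)\mid \#K_0$ from \cite[Proposition~4.2.3]{RZ} together with coprimality to $n$, while $H\cap K_n$ is open because a finite product of strongly complete groups is strongly complete. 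Your proof replaces the divisibility argument by the observation that $x\mapsto x^e$ is surjective on a profinite group of order coprime to $e$, which forces $G'$ into the verbal subgroup; this is an equally clean way to handle the coprime part and avoids citing \cite[Proposition~4.2.3]{RZ}, at the cost of invoking Theorem~\ref{CharStronglyComplete} once more (for each $G_i$) to get openness of the $V_i$. Either route is short; yours fits the verbal-subgroup theme of this section particularly well.
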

\begin{proof} Let $H$ be a subgroup of $G$ of index $n \in \N$. By assumption, the set $I_n$ of those $i \in I$ with $\gcd(n, \# G_i) \neq 1$ is finite. Let $I_0 := I \setminus I_n$ and put $K_n := \prod_{i \in I_n} G_i$ and $K_0 := \prod_{i \in I_0} G_i$. Let $H_0 := H \cap K_0$ and $H_n := H \cap K_n$.

Since $K_n$ is a product of finitely many strongly complete groups, it is strongly complete, hence $H_n$ is open in $K_n$. By \cite[Proposition 4.2.3]{RZ} we have $(K_0 : H_0) \mid \#K_0$; moreover, $(K_0 : H_0) \mid n$ and $\gcd(n, \# K_0) = 1$, hence $H_0 = K_0$ is open in $K_0$.

It follows that $H_0 \times H_n$ is open in $G$ and contained in $H$, hence $H$ is open in $G$.
\end{proof}

\begin{Example} \begin{enumerate}[(i)] 

\item Small pronilpotent groups are strongly complete. This follows from the fact that pronilpotent groups can be written as the direct product of their Sylow subgroups, see \cite[Proposition 2.3.8]{RZ}. If the group is small, these Sylow subgroups are also small and therefore finitely generated by \cite[Proposition 22.7.10]{FJ}, hence strongly complete and so we can write every small pronilpotent group as in Proposition \ref{AcceptableStronglyComplete}.

\item Consider the group $S_3 \times \prod_p \Z_p^p$, where $p$ ranges over all prime numbers. By Proposition \ref{AcceptableStronglyComplete}, this group is strongly complete, but it is neither pronilpotent, in particular not abelian, nor finitely generated, see \cite[Example 16.10.4]{FJ}.

\end{enumerate}
\end{Example}

One can ask if a result such as Theorem \ref{EquivalentStronglyComplete} can only hold for small, or even strongly complete, profinite groups, and conversely, if it is true for all small groups.

\begin{Question} \label{question} \begin{enumerate} [(i)] \item Let $G$ be a profinite group such that $G \equiv H$ implies $G \cong H$ for all profinite groups $H$. Does it follow that $G$ is small, or strongly complete?

\item Let $G$ be a small profinite group and $H$ a profinite group with $G \equiv H$. Does it follow that $G \cong H$?
\end{enumerate}
\end{Question}

\begin{Remark} Question \ref{question} (i) can be answered positively if $G$ is abelian: Let $G$ be an abelian profinite group which is not small. Consider the complete inverse system $S(G)$ of $G$, see \cite{F}. Since $G$ is not small, there exists an elementary extension $S(G')$ of $S(G)$ whose cardinality is strictly larger than that of $S(G)$. Since $G$ and $G'$ are abelian, $S(G) \equiv S(G')$ implies $G \equiv G'$ by \cite[Proposition 2.20]{F}, but $G$ and $G'$ cannot be isomorphic because $S(G)$ and $S(G')$ are not isomorphic.
\end{Remark}

After this note was finished, Mark Shusterman informed us that he constructed an example which implies a negative answer to Question \ref{question} (i).

\section*{ Acknowledgments }

This work is based on results of the author's master's thesis \cite{H} supervised by Arno Fehm at the University of Konstanz. The author would like to thank Arno Fehm for very helpful discussions on the topic and suggestions concerning this work. The author would also like to thank Moshe Jarden for comments on a previous version.

\end{document}